\DeclareMathOperator*{\EE}{\mathbb{E}}
\DeclareMathOperator*{\VV}{\mathbb{V}}
\DeclareMathOperator{\dist}{dist}
\providecommand{\RR}{\mathbb{R}}
\providecommand{\fun}{\ell}
\providecommand{\gun}{r}
\providecommand{\cS}{\mathcal{S}}
\newtheorem{theorem}{Theorem}[section]
\newtheorem{lemma}[theorem]{Lemma}
\newtheorem{corollary}[theorem]{Corollary}
\theoremstyle{definition}
\newtheorem{definition}[theorem]{Definition}
\title{Friedgut--Kalai--Naor theorem for slices of the Boolean cube}
\author{Yuval Filmus\thanks{Research conducted at the Simons Institute for the Theory of Computing during the 2013 fall semester on Real Analysis in Computer Science, and at the Institute for Advanced Study, Princeton, NJ. This material is based upon work supported by the National Science Foundation under agreement No.~DMS-1128155. Any opinions, findings and conclusions or recommendations expressed in this material are those of the authors, and do not necessarily reflect the views of the National Science Foundation.}}
\affil{Technion --- Israel Institute of Technology, Haifa, Israel}
\begin{document}

\maketitle

\begin{abstract}
The Friedgut--Kalai--Naor theorem, a basic result in the field of analysis of Boolean functions, states that if a Boolean function on the Boolean cube $\{0,1\}^n$ is close to a function of the form $c_0 + \sum_i c_i x_i$, then it is close to a dictatorship (a function depending on a single coordinate). We prove an analogous theorem for functions defined on the slice $\binom{[n]}{k} = \{ (x_1,\ldots,x_n) \in \{0,1\}^n : \sum_i x_i = k \}$.

When $k/n$ is bounded away from~$0$ and~$1$, our theorem states that if a function on the slice is close to a function of the form $\sum_i c_i x_i$ then it is close to a dictatorship. When $k/n$ is close to~$0$ or to~$1$, we can only guarantee being close to a junta (a function depending on a small number of coordinates); this deterioration in the guarantee is unavoidable, since for small $p$ a maximum of a small number of variables is close to their sum.

Kindler and Safra proved an FKN theorem for the biased Boolean cube, in which the underlying measure is the product measure $\mu_p(x) = p^{\sum_i x_i} (1-p)^{\sum_i (1-x_i)}$. As a corollary of our FKN theorem for the slice, we deduce a uniform version of the FKN theorem for the biased Boolean cube, in which the error bounds depend uniformly on $p$. Mirroring the situation on the slice, when $p$ is very close to~$0$ or to~$1$, we can only guarantee closeness to a junta.
\end{abstract}

\section{Introduction} \label{sec:intro}

Analysis of Boolean functions is a research area at the intersection of combinatorics, probability theory, functional analysis, and theoretical computer science. It traditionally studies real-valued functions on the Boolean cube $\{0,1\}^n$; often these functions are Boolean, that is, $\{0,1\}$-valued.

Recently interest has arisen in generalizing classical results in the area from functions on the Boolean cube to functions on other domains. While the theory for product domains such as $\{1,\ldots,k\}^n$ is similar to the theory for the Boolean cube, non-product domains such as symmetric groups and slices (defined below) present novel difficulties, the most conspicuous of which being the absence of a canonical Fourier basis.

In this paper, our object of study is \emph{slices of the Boolean cube}:
\[
 \binom{[n]}{k} = \bigl\{(x_1,\ldots,x_n) : \sum_{i=1}^n x_i = k \bigr\}.
\]
(Here $[n]$ stands for the set $\{1,\ldots,n\}$.)
Slices arise naturally in extremal combinatorics (the Erd\H{o}s--Ko--Rado theorem), graph theory ($G(n,M)$ graphs), and coding theory (constant-weight codes). They are also one of the simplest association schemes.

Recently the study of analysis of Boolean functions on the slice has gained traction, as witnessed by several recent articles~\cite{ODonnellWimmer,Wimmer,FilmusEJC,FKMW,FM,Srinivasan}. Classical theorems in the area which have been generalized to the slice include, among else, the Kahn--Kalai--Linial theorem~\cite{KKL,ODonnellWimmer}, Friedgut's junta theorem~\cite{FriedgutJunta,Wimmer,FilmusEJC}, and the Mossel--O'Donnell--Oleszkiewicz invariance principle~\cite{MOO,FKMW,FM}; see O'Donnell~\cite{ODonnell} for a description of these results (in their classical form).

This paper continues the project of generalizing analysis of Boolean functions to functions on the slice by proving a slice analog of the fundamental structural result of Friedgut, Kalai and A.~Naor~\cite{FKN}, which states that if a Boolean function on the Boolean cube $\{0,1\}^n$ is $\epsilon$-close to an affine function (a function of the form $(x_1,\ldots,x_n) \mapsto c_0 + \sum_{i=1}^n c_i x_i$) then it is $O(\epsilon)$-close to one of the functions $0,1,x_i,1-x_i$. (Two functions $f,g$ are $\epsilon$-close if $\|f-g\|^2 \leq \epsilon$, where $\|\cdot\|$ is the $L_2$ norm.)

Our main theorem states that for $2 \leq k \leq n-2$ and $p = \min(k/n,1-k/n)$, if a function $f\colon \binom{[n]}{k} \to \{0,1\}$ is $\epsilon$-close to an affine function for $\epsilon = O(p^2)$, then $f$ is $O(\epsilon)$-close to one of the functions $0,1, x_i,1-x_i$. For larger $\epsilon$, we prove that either $f$ or $1-f$ is $O(\epsilon)$-close to $\max_{i \in S} x_i$ (when $k \leq n/2$) or to $\min_{i \in S} x_i$ (when $k \geq n/2$) for some set $S$ of size $|S| = O(\sqrt{\epsilon}/p)$.

(The logically inclined reader can read $\bigwedge_{i \in S} x_i$ for $\max_{i \in S} x_i$ and $\bigvee_{i \in S} x_i$ for $\min_{i \in S} x_i$.)


\paragraph{Comparison to other FKN theorems}

The original FKN theorem~\cite{FKN} states that if a function $f\colon \{0,1\}^n \to \{0,1\}$ is close to an affine function, then it is close to a dictatorship (a function depending on at most one coordinate). Kindler and Safra~\cite{KindlerSafra,Kindler} extended this theorem to the $\mu_p$-setting (the so-called \emph{biased Boolean cube}), in which the distribution on $\{0,1\}^n$ is not uniform but is the product measure $\mu_p$, in which $\Pr[x_i = 1] = p$ for $i \in [n]$.
Their theorem states that for each $p$, if $f\colon \{0,1\}^n \to \RR$ is $\epsilon$-close to an affine function (with respect to the $\mu_p$ measure), then $f$ is $O(\epsilon)$-close to a dictatorship; the hidden constant depends on $p$.
(For more results in this vein, see~\cite{JOW,Nayar,Rubinstein,RS}.)

In contrast, our main theorem states that if $p \leq 1/2$ and $f\colon \binom{[n]}{pn} \to \{0,1\}$ is $\epsilon$-close to an affine function then it is $C \epsilon$-close to a dictatorship \emph{assuming $\epsilon = O(p^2)$}; for larger $\epsilon$, we only guarantee that $f$ depends on $O(\sqrt{\epsilon}/p)$ inputs.
The restriction to $\epsilon = O(p^2)$ is necessary, since the function $\max(x_1,x_2)$ is $p^2$-close to the affine function $x_1 + x_2$.
However, for \emph{fixed} $p$ we can obtain a statement similar to that of the original FKN theorem by letting the constant~$C$ depend on~$p$.

As stated above, the error bound in the FKN theorem of Kindler and Safra depends on $p$. In contrast, the error bound in our theorem is uniform over all $p$, and this is why we cannot guarantee closeness to a dictatorship. Using our theorem as a black box, we prove a uniform version of the FKN theorem of Kindler and Safra, whose form is identical to that of our theorem for the slice.



A similar situation occurs in the FKN theorem for permutations~\cite{EFF1,EFF2}. We can think of the set $S_n$ of permutations on $n$ points as a certain subset of $\binom{[n]^2}{[n]}$ consisting of those sets whose projections to the individual coordinates are equal to $[n]$. The density parameter $p$ thus has the value $p = n/n^2 = 1/n$, and as a result, given a function $f\colon S_n \to \{0,1\}$ which is close to an affine function, we can only guarantee that it is close to a function of the form $\max_{(i,j) \in S} x_{ij}$. When $f$ is balanced ($\EE[f] \approx 1/2$), however, we are able to guarantee that $f$ is close to a ``permutation-dictatorship''; we refer the interested reader to~\cite{EFF2}.

\paragraph{On the proof}

Our proof uses a novel proof method, the \emph{random sub-cube} method, which allows us to reduce the FKN theorem for the slice to the FKN theorem for the Boolean cube (see Keller~\cite{Keller} for a similar reduction from the biased $\mu_p$ measure on the Boolean cube to the uniform measure on the Boolean cube). The idea is to consider subsets of the slice which are isomorphic to a Boolean cube of dimension $k$ (assuming $k \leq n/2$):
\[
 \{a_1, b_1\} \times \cdots \times \{a_k,b_k\}.
\]
We can apply the classical FKN theorem on each of these sub-cubes. Moreover, if we choose $a_1,b_1,\ldots,a_k,b_k$ at random, then a uniform point on a uniform sub-cube is just a uniform point on the slice, and this allows us to deduce our FKN theorem.

\paragraph{Applications}

Our main theorem has recently been used by Das and Tran~\cite{DasTran} to determine the sharp threshold for the Erd\H{o}s--Ko--Rado property on a random hypergraph, improving on an earlier result of Bollob\'as et al.~\cite{BNR} which used the classical FKN theorem. For further work on the problem, see Devlin and Kahn~\cite{DK}.


\paragraph{Paper organization} After some preliminary definitions appearing in Section~\ref{sec:prel}, we formally state our main theorem in Section~\ref{sec:statement}, where we also derive the uniform FKN theorem for the biased Boolean cube. The proof itself appears in Section~\ref{sec:proof}.

\paragraph{Acknowledgements} The author thanks Guy Kindler, Elchanan Mossel and Karl Wimmer for helpful discussions, Manh Tuan Tran for pointing out a mistake in an earlier version, and the anonymous reviewers for helpful suggestions.

\section{Preliminaries} \label{sec:prel}

\paragraph{Notations}
We use the notations $[n] = \{1,\ldots,n\}$ and $\dist(x,S) = \min_{y \in S} |x-y|$.

A \emph{Boolean function} is a $\{0,1\}$-valued function.
An \emph{affine function} is a function of the form
\[ \ell(x_1,\ldots,x_n) = c_0 + \sum_{i=1}^n c_i x_i. \]
Our main theorems involve the maximum and the minimum of a set of Boolean variables. \emph{We adopt the convention that $\max \emptyset = 0$ and $\min \emptyset = 1$.}

For a function $f$ on a finite domain $D$, the squared $L_2$ norm of $f$ is given by $\|f\|^2 = \EE[f^2]$, where the expectation is with respect to the uniform distribution over $D$. The squares norm satisfies a triangle inequality of the form $\|f+g\|^2 \leq 2(\|f\|^2 + \|g\|^2)$.

We say that two functions $f,g$ on the same domain are \emph{$\epsilon$-close} if $\|f-g\|^2 \leq \epsilon$. If $f,g$ are Boolean then they are $\epsilon$-close iff $\Pr[f \neq g] \leq \epsilon$. If $f$ is Boolean, $g$ is real-value, and $G$ is the Boolean function closest to $g$, then $\|f - G\|^2 = O(\|f - g\|^2)$. Note that $G$ is obtained by ``rounding'' $G$ to $\{0,1\}$, that is $G(x) = \operatorname{argmin}_{b \in \{0,1\}} |x-b|$.

\paragraph{Functions on the Boolean cube}
The \emph{Boolean cube} is the set $\{0,1\}^n$. We identify functions on the Boolean cube with functions on the Boolean-valued variables $x_1,\ldots,x_n$.

Each function on the Boolean cube has a unique Fourier expansion
\[
 f(x_1,\ldots,x_n) = \sum_{S \subseteq [n]} \hat{f}(S) \chi_S, \qquad
 \text{where } \chi_S = (-1)^{\sum_{i \in S} x_i}.
\]
Parseval's identity states that $\|f\|^2 = \sum_S \hat{f}(S)^2$.

\paragraph{Functions on the slice}
For integers $n \geq 2$ and $0 \leq k \leq n$, the \emph{slice} $\binom{[n]}{k}$ is defined as
\[
 \binom{[n]}{k} = \bigl\{ (x_1,\ldots,x_n) \in \{0,1\}^n : \sum_{i=1}^n x_i = k. \bigr\}
\]
Alternatively, we can think of $\binom{[n]}{k}$ as the collection of all subsets of $[n]$ of size exactly $k$, using the correspondence $(x_1,\ldots,x_n) \mapsto \{ i \in [n] : x_i = 1 \}$. We use this correspondence freely in the paper.

Every affine function on the slice has a unique representation of the form
\[ \ell(x_1,\ldots,x_n) = \sum_{i=1}^n c_i x_i. \]

\paragraph{The FKN theorem}

The Friedgut--Kalai--Naor theorem (FKN theorem for short) is the following result.

\begin{theorem}[Friedgut--Kalai--Naor~\cite{FKN}] \label{thm:fkn}
 Suppose $f\colon \{0,1\}^n \to \{0,1\}$ is $\epsilon$-close to an affine function. Then $f$ is $O(\epsilon)$-close to one of the functions $\{0,1,x_1,1-x_1,\ldots,x_n,1-x_n\}$.
\end{theorem}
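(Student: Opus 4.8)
This statement is the classical FKN theorem, and the plan is to prove it by playing the Fourier structure of the best affine approximation to $f$ against the algebraic identity $f^2 = f$ that holds because $f$ is Boolean. First I would normalise the setup. Among all affine functions the $L_2$-closest one to $f$ is the truncation $f^{\le 1} := \hat f(\emptyset) + \sum_i \hat f(\{i\})\chi_{\{i\}}$ of its Fourier expansion to levels $0$ and $1$ (any other affine function only perturbs Fourier mass on levels $\le 1$, which cannot help), so the hypothesis gives $W^{\ge 2}[f] := \sum_{|S| \ge 2} \hat f(S)^2 \le \epsilon$. Replacing $f$ by $1 - 2f$ I may assume $f$ is $\{-1,1\}$-valued, at the cost of enlarging $\epsilon$ by a factor of $4$; then $f^2 \equiv 1$, so Parseval gives $a_0^2 + \sigma^2 = 1 - W^{\ge 2}[f] \ge 1 - \epsilon$, where $a_0 = \hat f(\emptyset)$, $a_i = \hat f(\{i\})$ and $\sigma^2 = \sum_i a_i^2$. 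Write $\phi = f^{\le 1}$ and $h = f - \phi$, so that $\|h\|^2 = W^{\ge 2}[f] \le \epsilon$.

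The heart of the argument is a single identity obtained by squaring $\phi$. On the one hand, expanding $\phi = a_0 + \sum_i a_i \chi_i$ and using $\chi_i^2 = 1$ gives $\phi^2 = (a_0^2 + \sigma^2) + 2 a_0 \sum_i a_i \chi_i + 2 \sum_{i < j} a_i a_j \chi_{\{i,j\}}$, and computing the variance of $\phi^2$ from this expansion yields
\[
 \EE\bigl[(\phi^2 - 1)^2\bigr] = 4 a_0^2 \sigma^2 + 2 \sum_{i \ne j} a_i^2 a_j^2 + W^{\ge 2}[f]^2 .
\]
On the other hand, since $f^2 = 1$ we have $\phi^2 - 1 = (\phi - f)(\phi + f) = h^2 - 2fh$, so $\EE[(\phi^2-1)^2] \le 2\|h\|_4^4 + 8\|h\|^2 \le 2\|h\|_4^4 + 8\epsilon$. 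Granting the bound $\|h\|_4^4 = O(\epsilon)$ (see below), it follows that the two nonnegative terms $a_0^2 \sigma^2$ and $\sum_{i \ne j} a_i^2 a_j^2 = \sigma^4 - \sum_i a_i^4$ are each $O(\epsilon)$.

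From here the structure is read off by elementary inequalities. Combining $a_0^2 \sigma^2 = O(\epsilon)$ with $a_0^2 + \sigma^2 \ge 1 - \epsilon$ forces one of two cases. If $\sigma^2 = O(\epsilon)$, then $a_0^2 \ge 1 - O(\epsilon)$ and $\|f - a_0\|^2 = \sigma^2 + W^{\ge 2}[f] = O(\epsilon)$, so $f$ is $O(\epsilon)$-close to the constant $\operatorname{sign}(a_0)$. Otherwise $a_0^2 = O(\epsilon)$, so $\sigma^2 \ge 1 - O(\epsilon)$, and then $\sigma^4 - \sum_i a_i^4 = O(\epsilon)$ forces $\max_i a_i^2 \ge \sigma^2 - O(\epsilon) \ge 1 - O(\epsilon)$; for the maximising index $j$ one has $\|f - a_j \chi_j\|^2 = a_0^2 + (\sigma^2 - a_j^2) + W^{\ge 2}[f] = O(\epsilon)$, hence, since $(|a_j| - 1)^2 = O(\epsilon^2)$, $f$ is $O(\epsilon)$-close to the $\{-1,1\}$-valued function $\operatorname{sign}(a_j)\chi_j$. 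Undoing the substitution $f \mapsto 1 - 2f$, the original function is $O(\epsilon)$-close to one of $0, 1, x_i, 1 - x_i$; the passage from the real approximants $a_0$, $a_j\chi_j$ to honest Boolean functions is the rounding remark in the Preliminaries.

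The one step that needs real work — and which I expect to be the main obstacle — is the estimate $\|h\|_4^4 = O(\epsilon)$, i.e.\ $\|f^{\ge 2}\|_4^4 = O\bigl(\|f^{\ge 2}\|^2\bigr)$. It genuinely uses that $f$ is Boolean: $h$ has unbounded degree, so a crude appeal to hypercontractivity fails, and a generic function with small $L_2$ norm can have comparatively large $L_4$ norm. The plan is to combine the pointwise bound $|h| = |f - \phi| \le 1 + |\phi|$ with the degree-one hypercontractive estimate $\|\phi\|_4 \le \sqrt 3\,\|\phi\|_2 \le \sqrt 3$, expanding $\EE[h^4]$ and using Cauchy--Schwarz so that the factor of $\|h\|_4$ on the right is absorbed into the left; morally, $\phi = f^{\le 1}$ cannot be simultaneously large in $L_\infty$ and $L_2$-close to the $\{-1,1\}$-valued $f$. (A more conceptual alternative sidesteps this identity entirely and argues by anti-concentration: if no single $a_i$ carries almost all of $\sigma^2$ then the affine form $\phi$ is spread over a genuine range of values and cannot be $L_2$-close to a $\{-1,1\}$-valued function — this is closer to the route taken by Kindler and Safra in the biased setting.)
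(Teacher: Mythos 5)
First, note that the paper does not actually prove Theorem~\ref{thm:fkn}: it is quoted from~\cite{FKN} and used as a black box (through Lemma~\ref{lem:fkn-consequence}), so your proposal can only be judged against the classical proofs. Your skeleton is the standard one --- project onto the degree-$\le 1$ part $\phi$, note $W^{\ge 2}[f]\le\epsilon$, square $\phi$, read off $4a_0^2\sigma^2+2\sum_{i\ne j}a_i^2a_j^2+W^{\ge 2}[f]^2$ from Parseval, and run the two-case analysis --- and all of that is correct. The gap is exactly at the step you flag, $\EE[h^4]=O(\epsilon)$ for $h=f^{\ge 2}$, and the argument you propose for it does not close. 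From $|h|\le 1+|\phi|$, Cauchy--Schwarz and $\|\phi\|_4\le\sqrt3$ you get $\EE[h^4]\le 2\EE[h^2]+2\EE[h^2\phi^2]\le 2\epsilon+6\|h\|_4^2$, i.e.\ $X^2\le 2\epsilon+6X$ with $X=\|h\|_4^2$. This inequality is satisfied by $X\approx 6$ no matter how small $\epsilon$ is --- the term you hoped to ``absorb'' sits on the wrong side --- so nothing forces $X=O(\epsilon)$. A truncation of $|\phi|$ at height $T\sim\sqrt{\log(1/\epsilon)}$ together with the sub-Gaussian tail of the degree-one $\phi$ only yields $\EE[h^4]=O(\epsilon\log(1/\epsilon))$, which already loses the claimed $O(\epsilon)$; and the clean statement $\|f^{\ge 2}\|_4^4=O(\|f^{\ge 2}\|_2^2)$ is, as far as I can see, easy only \emph{after} FKN is known (compare $f$ with the approximating dictator), so routing the proof through it is circular in spirit.

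The standard repair keeps your identity but never touches $\EE[h^4]$. Since $\phi^2-1=(\phi-f)(\phi+f)$, Cauchy--Schwarz gives the \emph{first}-moment bound $\EE|\phi^2-1|\le\|f-\phi\|_2\,\|f+\phi\|_2\le 2\sqrt{\epsilon}$; and $\phi^2-1$ is a polynomial of degree at most $2$, so hypercontractivity in the form $\|q\|_2\le 9\|q\|_1$ for degree-$2$ polynomials (H\"older $\EE[q^2]\le\EE[|q|]^{2/3}\EE[q^4]^{1/3}$ combined with Bonami's $\|q\|_4\le 3\|q\|_2$) upgrades this to $\EE[(\phi^2-1)^2]=O(\epsilon)$. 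With that single substitution, the rest of your case analysis (small $\sigma^2$ gives closeness to a constant; small $a_0^2$ plus $\sigma^4-\sum_i a_i^4=O(\epsilon)$ gives a dominant coordinate, then round) goes through verbatim and yields the theorem with the stated $O(\epsilon)$ bound. Your ``anti-concentration alternative'' is also a viable known route, but as written it is only a hint, not a proof.
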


\section{Statement of main theorem} \label{sec:statement}

Our main theorem is a version of Theorem~\ref{thm:fkn} for functions on a slice.

\begin{theorem} \label{thm:fkn-slice}
 Suppose $f\colon \binom{[n]}{k} \to \{0,1\}$ is $\epsilon$-close to an affine function, where $2 \leq k \leq n-2$. Define $p \triangleq \min(k/n,1-k/n)$. Then either $f$ or $1-f$ is $O(\epsilon)$-close to $\max_{i \in S} x_i$ (when $p \leq 1/2$) or to $\min_{i \in S} x_i$ (when $p \geq 1/2$) for some set $S$ of size at most $\max(1,O(\sqrt{\epsilon}/p))$.
\end{theorem}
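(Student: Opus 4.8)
The plan is to use the random sub-cube method advertised in the introduction: embed Boolean cubes of dimension $k$ inside the slice, apply the classical FKN theorem (Theorem~\ref{thm:fkn}) on each one, and then average over a random choice of sub-cube. Assume $k \le n/2$ (so $p = k/n$; the case $k \ge n/2$ follows by replacing $f$ with $f$ composed with complementation $x \mapsto \mathbf{1} - x$, which swaps $\max$ and $\min$). Partition a uniformly random ordering of $[n]$ into $k$ disjoint pairs $\{a_1,b_1\},\ldots,\{a_k,b_k\}$ together with $n-2k$ leftover ``frozen'' coordinates, and consider the sub-cube $C = \{a_1,b_1\}\times\cdots\times\{a_k,b_k\}$ consisting of all $k$-sets that pick exactly one element from each pair (the frozen coordinates are fixed to $0$). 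This $C$ is isomorphic to $\{0,1\}^k$ via $y_j = x_{a_j}$, and — crucially — a uniform point of a uniformly random such $C$ is exactly a uniform point of $\binom{[n]}{k}$. The restriction of the affine function $\sum_i c_i x_i$ to $C$ is again affine in the $y_j$'s (with constant term coming from the $c_i$'s of frozen coordinates), so on each $C$ the hypothesis and conclusion of Theorem~\ref{thm:fkn} apply to $f|_C$.

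The key quantitative step is to run this in expectation. Writing $\epsilon_C = \|f|_C - \ell|_C\|^2$, we have $\EE_C[\epsilon_C] = \epsilon$ by the uniform-point property, so $\EE_C[\epsilon_C] = \epsilon$ and by Theorem~\ref{thm:fkn} each $f|_C$ is $O(\epsilon_C)$-close on $C$ to some function $g_C \in \{0,1,y_j,1-y_j\}$. The heart of the argument is to show these local approximators are ``globally consistent'': for two overlapping random sub-cubes the FKN approximators must agree on the overlap (up to the respective errors), which forces all the $g_C$ that are of dictator type to point, essentially, to the \emph{same} coordinate of $[n]$, or else all be constant. Here I expect the main technical work — and the main obstacle — to lie. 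One must handle the coupling between sub-cubes carefully: take two sub-cubes that share all but one pair, or condition on a pair being split a certain way, and use a triangle-inequality/union-bound argument to propagate agreement. A clean way to organize this is to first identify the single ``influential'' coordinate $i^\star$ (or declare $f$ is $O(\epsilon)$-close to a constant) by looking at the degree-$1$ Fourier-type weights of $f$ on the slice, i.e. at $\sum_i \big(\EE[f \mid x_i = 1] - \EE[f \mid x_i = 0]\big)^2$ or the analogous quantity from the Lindsey-type orthogonal decomposition on the slice, then show that on almost every sub-cube the FKN approximator is the dictator at $i^\star$ (when $i^\star$ lands in the sub-cube and is not frozen).

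Once consistency is established, assemble the global structure. If all local approximators are constant, averaging gives $\|f - c\|^2 = \EE_C\|f|_C - c\|^2 = O(\EE_C[\epsilon_C]) = O(\epsilon)$ with $c \in \{0,1\}$, and we are done with $S = \emptyset$. Otherwise fix the distinguished coordinate $i^\star$; by replacing $f$ with $1-f$ if necessary we may assume the dictator is $x_{i^\star}$ rather than $1-x_{i^\star}$. The subtlety — and the reason the theorem outputs $\max_{i\in S}x_i$ rather than a pure dictator — is that on sub-cubes \emph{not} containing $i^\star$ as an active coordinate, $f|_C$ may look like a dictator at some \emph{other} coordinate, and this can happen with probability up to $O(\sqrt\epsilon/p)$ worth of coordinates before the errors swamp the budget; collecting all such coordinates into a set $S \ni i^\star$ and noting that on the slice $\max_{i \in S} x_i$ is itself $O(|S|^2 p^2)$-close to the affine function $\sum_{i\in S} x_i$, one shows $\|f - \max_{i\in S} x_i\|^2 = O(\epsilon)$ by a final averaging over $C$, with $|S| = \max(1, O(\sqrt\epsilon/p))$ coming from balancing $|S|\cdot(\text{per-coordinate contribution}) $ against $\epsilon$. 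The bookkeeping in this last step — in particular verifying that the rounding from the real-valued $\ell$ to a Boolean function only costs a constant factor (as noted in the preliminaries) and that the $n-2k$ frozen coordinates do not distort the averaging when $k$ is much smaller than $n$ — is routine but must be done with care.
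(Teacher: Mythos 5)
Your setup of the random sub-cube coupling is the same as the paper's (a uniform point of a uniform sub-cube $\{a_1,b_1\}\times\cdots\times\{a_k,b_k\}$ is a uniform point of the slice, so $\EE_C[\epsilon_C]=\epsilon$), but from there your plan diverges, and the divergence is exactly where the gap sits: you propose to take the local FKN approximator $g_C\in\{0,1,y_j,1-y_j\}$ on each sub-cube and glue them by a consistency argument over overlapping sub-cubes, and you yourself flag this gluing as ``the heart of the argument'' and ``the main obstacle'' without carrying it out. In the regime that makes the theorem interesting, $\epsilon\gg p^2$, there is no single influential coordinate $i^\star$: the true approximator is $\max_{i\in S}x_i$ with $|S|$ as large as $\Theta(\sqrt{\epsilon}/p)$, so different sub-cubes genuinely report different dictators, sub-cubes with atypically large $\epsilon_C$ report arbitrary ones, and any fixed coordinate is active in a random sub-cube only with probability $2p$, so overlap/union-bound propagation has no evident bound that is uniform in $p$. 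Your final step--collecting the reported dictators into $S$, ``balancing'' to get $|S|=\max(1,O(\sqrt{\epsilon}/p))$, and concluding $\|f-\max_{i\in S}x_i\|^2=O(\epsilon)$--is asserted rather than derived: nothing in the sketch rules out, say, that the local data assemble into $\sum_{i\in S}x_i+m$ with $m$ far from $\{0,1\}$ or with large $|S|$, and no mechanism is given for the size bound.

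The paper avoids gluing altogether. It uses the classical FKN theorem on each sub-cube only through a numerical consequence (Lemma~\ref{lem:fkn-consequence}): the restriction's degree-one Fourier coefficients lie near $\{0,\pm\tfrac12\}$, which after averaging over the random sub-cube becomes a statement purely about the coefficients of $\ell$, namely $k\,\EE_{i\neq j}\dist(c_i-c_j,\{0,\pm1\})^2=O(\epsilon)$ (Lemma~\ref{lem:random-sub-cube}). Everything after that is coefficient analysis and probability on the slice: the $c_i$ concentrate near two values (or three values when $\epsilon=\Omega(p)$, Lemma~\ref{lem:sum-diff-approx}), a variance formula (Corollary~\ref{cor:approx}) shows $f$ is $O(\epsilon)$-close to $\sum_{i\in S}x_i+m$ or $\sum_{i\in S_+}x_i-\sum_{i\in S_-}x_i+m$, and hypergeometric estimates (Lemma~\ref{lem:hypergeometric-0}, Lemma~\ref{lem:hypergeometric-2}) determine when such a function can be close to Boolean, force the constant into $\{0,1\}$, and yield the sharp bound $|S|=O(\sqrt{\epsilon}/p)$ before rounding to $\max_{i\in S}x_i$. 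These quantitative ingredients, which are what actually produce the junta bound and handle the large-$\epsilon$ case, have no counterpart in your proposal; to make your route work you would need to supply both the consistency argument and a substitute for the hypergeometric estimates, which is essentially the whole difficulty of the theorem.
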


(We remind the reader that by convention, if $S = \emptyset$ then $\max_{i \in S} x_i = 0$ and $\min_{i \in S} x_i = 1$.)

The statement implies that for some constant $C$, if $\epsilon < Cp^2$ then we are guaranteed that $|S| \leq 1$, and so $f$ can be approximated by a function of one of the forms $0,1,x_i,1-x_i$.

The bound on the size of $S$ is optimal: for $ps^2 = o(1)$, the function $\max(x_1,\ldots,x_s)$ is $O((ps)^2)$-close to the linear function $x_1+\cdots+x_s$, but cannot be approximated using a smaller maximum without incurring an error of $\Omega(p) = \omega((ps)^2)$.

When $k \in \{0,n\}$, the theorem is trivially true since every function is constant. When $k \in \{1,n-1\}$, the theorem is trivially true without the bound on $|S|$ since for $k = 1$ \emph{every} function on $\binom{[n]}{1}$ is affine, and every Boolean function $f$ on $\binom{[n]}{1}$ satisfies $f = \max \{x_i : f(\{i\}) = 1\}$; the case $k = n-1$ is similar.

\subsection{Uniform FKN theorem for the Boolean cube}

Theorem~\ref{thm:fkn-slice} can be used to derive a uniform biased version of Theorem~\ref{thm:fkn}.

\begin{definition} \label{def:mu-p}
 For each $n$, the measure $\mu_p$ is a measure on $\{0,1\}^n$ whose value on the atom $(x_1,\ldots,x_n)$ is $p^{\sum_i x_i} (1-p)^{\sum_i (1-x_i)}$.
\end{definition}

\begin{theorem} \label{thm:fkn-cube}
 Suppose $f\colon \{0,1\}^n \to \{0,1\}$ is $\epsilon$-close to an affine function with respect to the $\mu_p$ measure, for some $p \in (0,1)$. Then with respect to the $\mu_p$ measure, either $f$ or $1-f$ is $O(\epsilon)$-close to $\max_{i \in S} x_i$ (when $p \leq 1/2$) or to $\min_{i \in S} x_i$ (when $p > 1/2$) for some set $S$ of size at most $\max(1,O(\sqrt{\epsilon}/\min(p,1-p)))$.
\end{theorem}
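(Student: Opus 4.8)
The plan is to deduce Theorem~\ref{thm:fkn-cube} from Theorem~\ref{thm:fkn-slice} by a \emph{random slice} argument, mirroring the random sub-cube reduction used in the other direction. Fix $p \in (0,1/2]$; the case $p > 1/2$ follows by replacing $f$ with $f(1-x_1,\ldots,1-x_n)$ and $p$ with $1-p$. First I would approximate $\mu_p$ by a uniform measure on a large slice: take $N$ much larger than $n$ with $k = pN$ an integer (if $pN$ is not an integer we either perturb $N$ slightly, absorbing the error into the constants, or pass to a rational approximation of $p$ — a routine technicality). Embed $\{0,1\}^n$ into $\binom{[N]}{k}$ by choosing, for each $i \in [n]$, a block $B_i$ of roughly $N/n$ fresh coordinates and identifying $x_i = 1$ with ``the first $pN/n$ coordinates of $B_i$ are chosen'' — more cleanly, think of a uniform $y \in \binom{[N]}{k}$ as recording, in each block $B_i$, how many of its coordinates are selected; when $N/n \to \infty$ the normalized count in block $B_i$ concentrates, and the event ``block $B_i$ has selection-fraction above/below the threshold'' has probability $p$ independently across $i$, recovering $\mu_p$ in the limit. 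Thus a function on $\{0,1\}^n$ lifts to a function $F$ on $\binom{[N]}{k}$ whose distribution (under uniform sampling from the slice) converges to the $\mu_p$-distribution of $f$.

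The key technical point is that affine functions lift to (near-)affine functions. An affine function $c_0 + \sum_i c_i x_i$ on the cube lifts, under the block encoding, to something of the form $c_0 + \sum_i c_i \mathbf{1}[\text{block } B_i \text{ over threshold}]$, which is \emph{not} itself affine on the slice; but I would instead work with the genuinely affine lift $\tilde\ell(y) = c_0/1 + \sum_i \frac{c_i}{(pN/n)}\sum_{j \in B_i} y_j$ (suitably normalized so that the block-sum is a mean-zero-ish estimator of $x_i$), and show that $\|F - \tilde\ell\|_{2,\text{slice}}^2 \le \|f - \ell\|_{2,\mu_p}^2 + o(1)$ as $N \to \infty$, by the same concentration estimate: each block-sum $\frac{1}{pN/n}\sum_{j \in B_i} y_j$ converges in $L_2$ to $x_i$. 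Hence $F$ is $(\epsilon + o(1))$-close to an affine function on $\binom{[N]}{k}$, with density parameter exactly $p$.

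Now apply Theorem~\ref{thm:fkn-slice} to $F$ on $\binom{[N]}{k}$: since $p \le 1/2$, either $F$ or $1-F$ is $O(\epsilon + o(1))$-close to $\max_{j \in T} y_j$ for some $T \subseteq [N]$ with $|T| = \max(1, O(\sqrt{\epsilon}/p))$. The remaining step is to pull this structure back to the cube. The coordinates $j \in T$ lie in various blocks; I claim that, up to an additional $o(1)$ loss, we may assume $T$ intersects each block $B_i$ in either zero or ``essentially all'' of $B_i$ — more precisely, because $|T|$ is bounded while $|B_i| = N/n \to \infty$, a bounded set $T$ can contain at most a vanishing fraction of any block, so $\max_{j \in T} y_j$ is \emph{not} directly a function of $x$. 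This is the main obstacle, and it is resolved by a compactness/averaging argument: as $N$ ranges over a sequence with $N/n \to \infty$, the sets $T = T(N)$ have bounded size, so after passing to a subsequence the ``pattern'' of which blocks are hit stabilizes to a fixed subset $S \subseteq [n]$ of size $|S| \le |T| = \max(1,O(\sqrt\epsilon/p))$; one then shows that the only way $\max_{j \in T} y_j$ can be $O(\epsilon)$-close to (a lift of) a Boolean function on $x$ is if it agrees in the limit with $\max_{i \in S} x_i$, and the approximation passes to the limit. Taking $N \to \infty$ along this subsequence, the bound $\|f - \max_{i \in S} x_i\|_{2,\mu_p}^2 = O(\epsilon)$ survives (all error terms being $O(\epsilon) + o(1)$ with the $o(1)$ vanishing), which is exactly the conclusion of Theorem~\ref{thm:fkn-cube}. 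The same argument with $1-f$ handles the other case, and replacing every $\max$ with $\min$ and $p$ with $1-p$ handles $p > 1/2$.
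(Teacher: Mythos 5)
Your high-level plan (realize $\mu_p$ as a limit of uniform measures on large slices and invoke Theorem~\ref{thm:fkn-slice}) is the right one, but the specific embedding you chose breaks down at the first step. In your block encoding the number of selected coordinates in a block $B_i$ of size $N/n$ is hypergeometric with mean $p\,|B_i|$, so the selection \emph{fraction} concentrates at $p$ as $N/n \to \infty$; thresholding a concentrating quantity gives an event of probability tending to $0$, $1$, or (at the median) roughly $1/2$ --- never $p$. So the claim that the block events ``have probability $p$ independently across $i$, recovering $\mu_p$ in the limit'' is false, and likewise the normalized block sum $\frac{1}{pN/n}\sum_{j \in B_i} y_j$ converges in $L_2$ to the constant $1$, not to a Bernoulli$(p)$ variable $x_i$; hence the inequality $\|F-\tilde\ell\|^2 \leq \|f-\ell\|^2_{\mu_p} + o(1)$ has no basis. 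A second, independent gap is the pull-back step, which you yourself flag as ``the main obstacle'': the approximator $\max_{j \in T} y_j$ delivered by Theorem~\ref{thm:fkn-slice} involves individual slice coordinates, each of which is an asymptotically negligible part of a block and hence carries essentially no information about your block-level variables, so there is no reason it should ``agree in the limit with $\max_{i \in S} x_i$''; the compactness/averaging argument is asserted, not given.

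The reduction works with a far simpler embedding, which also dissolves the pull-back problem. Take $k_N = \lfloor pN \rfloor$ and extend $f$ and $\ell$ to $\binom{[N]}{k_N}$ by ignoring the coordinates beyond the first $n$: $f_N(y_1,\ldots,y_N) = f(y_1,\ldots,y_n)$ and similarly $\ell_N$ (which is genuinely affine, with zero coefficients on the new coordinates). The marginal distribution of the first $n$ coordinates of a uniform point on the slice tends to $\mu_p$ (sampling without replacement converges to sampling with replacement on a fixed number of coordinates), so $\|f_N - \ell_N\|^2 \leq 2\epsilon$ for large $N$, and Theorem~\ref{thm:fkn-slice} applies with $p_N \to \min(p,1-p)$. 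If the resulting approximator $g_N$ uses only the first $n$ coordinates you are done; if it uses a bounded number of coordinates beyond $n$, fix (substitute values for) those coordinates --- by averaging, some substitution preserves the $O(\epsilon)$-closeness, and the restriction of a max or min under such a substitution is again a max, a min, or a constant in the first $n$ coordinates. This substitution trick is exactly what replaces your compactness argument, and it is only available because the cube coordinates are literally slice coordinates rather than block aggregates.
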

\begin{proof}
 Suppose that $f$ is $\epsilon$-close to the affine function $\fun$. Let $N$ be a large integer (we will take the limit $N\to\infty$ later on), let $k_N = \lfloor pN \rfloor$, and consider the slice $\cS_N = \binom{[N]}{k_N}$. It is not hard to check that if $A$ is chosen randomly from $\cS$, then the distribution of $A \cap [n]$ tends (as a function of $N$) to the distribution $\mu_p$. Moreover, if we take $p_N = \min(k_N/N,1-k_N/N)$ then $p_N \to \min(p,1-p)$.
  
 Extend $f$ to a function $f_N$ on the slice $\cS_N$ by taking $f_N(x_1,\ldots,x_N) = f(x_1,\ldots,x_n)$, and extend $\fun$ to a function $\fun_N$ in a similar way. The remarks above show that $\|f_N - \fun_N\|^2 \leq 2\epsilon$ for large enough $N$. Also, for large enough $N$, $\min(k_N,N-k_N) = p_N N \geq 2$ (since $p_N \to p$). Therefore we can apply Theorem~\ref{thm:fkn-slice} to deduce that for large enough $N$, either $f_N$ or $1-f_N$ is $O(\epsilon)$-close to a maximum or a minimum of up to $\max(1,O(\sqrt{\epsilon}/p_N)) = \max(1,O(\sqrt{\epsilon}/p))$ inputs.
 
 Let $g_N$ denote the approximating function (the maximum or minimum of a small number of coordinates). If $g_N$ depends only on the first $n$ coordinates, and $g$ is the corresponding function on $\{0,1\}^n$, then $f$ or $1-f$ is $O(\epsilon)$-close to $g$, completing the proof. When $g_N$ depends on coordinates beyond the first $n$, there exists a substitution $\sigma$ to these coordinates such that $f_N|_\sigma$ or $1-f_N|_\sigma$ is $O(\epsilon)$-close to $g_N|_\sigma$. Since the number of substituted coordinates doesn't depend on $N$, we can complete the proof as before, noting that $g_N|_\sigma$ is either a maximum, a minimum, or a constant.
\end{proof}

The proof of Theorem~\ref{thm:fkn-cube} is similar to an argument of Ahlswede and Khachatrian~\cite{AK5}, in which the authors derive an Erd\H{o}s--Ko--Rado theorem on $\{1,\ldots,\alpha\}^n$ from a similar theorem for $\binom{[N]}{\alpha^{-1}N}$. Another version of the same argument is due to Dinur and Safra~\cite{DinurSafra}, who derive an Erd\H{o}s--Ko--Rado theorem on the Boolean cube $\{0,1\}^n$ with respect to $\mu_p$ from a similar theorem for $\binom{[N]}{pN}$.

Arguments going in the other direction are also known (for example, Friedgut~\cite{Friedgut}), but are more complicated and sometimes result in degredation of parameters. This highlights the fact that Theorem~\ref{thm:fkn-slice} is more general than Theorem~\ref{thm:fkn-cube}, in the sense that the former can be used to derive the latter, but not vice versa.

On the other hand, while we derived Theorem~\ref{thm:fkn-cube} from Theorem~\ref{thm:fkn-slice}, it is probably easier to prove it directly.
In particular, the estimates on hypergeometric distributions which are necessary for the proof of Theorem~\ref{thm:fkn-slice} (for example, Lemma~\ref{lem:hypergeometric-0} below) would be replaced with similar but simpler estimates on geometric distributions.

\subsection{The case \texorpdfstring{$\epsilon = 0$}{epsilon = 0}}

As a warm-up, we prove that the only Boolean affine functions on the slice are $0,1,x_i,1-x_i$.

\begin{lemma} \label{lem:cf-hard}
 Suppose $f\colon \binom{[n]}{k} \to \{0,1\}$ is affine, where $2 \leq k \leq n-2$. Then $f \in \{0,1\}$ or $f \in \{x_i,1-x_i\}$ for some $i$.
\end{lemma}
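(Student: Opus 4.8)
The plan is to show that a Boolean affine function $f = \sum_i c_i x_i$ on the slice must have its coefficients $c_i$ taking essentially at most two distinct values, and then to pin down exactly which configurations are possible.

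The plan is to use the unique affine representation $f = \sum_{i=1}^n c_i x_i$ and extract strong arithmetic constraints on the coefficients $c_i$ from the fact that $f$ is Boolean, by comparing the values of $f$ at pairs of points related by a transposition of coordinates.

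First I would observe that for any two coordinates $i \neq j$, since $2 \le k \le n-2$ there is a point $x \in \binom{[n]}{k}$ with $x_i = 1$ and $x_j = 0$; letting $x'$ be obtained from $x$ by swapping the values of these two coordinates, we have $x' \in \binom{[n]}{k}$ and $f(x) - f(x') = c_i - c_j$. As $f$ is $\{0,1\}$-valued this forces $c_i - c_j \in \{-1,0,1\}$ for \emph{every} pair $i \ne j$. A short argument then shows that either all the $c_i$ are equal, or they take exactly two values $a$ and $a+1$: if $v < w$ are two values taken by the coefficients then $w - v \le 1$ together with $w - v \in \{-1,0,1\}$ forces $w = v+1$, and then any third value $u$ would have to satisfy $|u - v| \le 1$ and $|u - w| \le 1$, which leaves no room for a $u \notin \{v, v+1\}$.

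If all $c_i$ equal some $a$, then $f \equiv ak$ is constant, hence $f \in \{0,1\}$. Otherwise write $T = \{i : c_i = a+1\}$ and $t = |T|$, so $1 \le t \le n-1$; on the slice $f(x) = ak + |x \cap T|$. Since $x$ ranges over all $k$-subsets of $[n]$, the quantity $|x \cap T|$ takes exactly $\min(k, n-k, t, n-t) + 1$ distinct consecutive integer values, and for $f$ to be Boolean this number must be at most $2$. Because $2 \le k \le n-2$ gives $\min(k, n-k) \ge 2$, we must have $\min(t, n-t) \le 1$, i.e.\ $t = 1$ or $t = n-1$. In the first case $T = \{i\}$ and $f = ak + x_i$, and the Boolean condition forces $ak = 0$, hence $a = 0$ (as $k \ge 2$) and $f = x_i$. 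In the second case $T = [n] \setminus \{j\}$ and, using $\sum_\ell x_\ell = k$, we get $f = (a+1)k - x_j$; the Boolean condition then forces $(a+1)k = 1$ and $f = 1 - x_j$.

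The only real content is the first step — turning ``$f$ is Boolean'' into the pairwise constraint $c_i - c_j \in \{-1,0,1\}$ — and the place where the hypothesis $2 \le k \le n-2$ is genuinely used is in ruling out the intermediate values $2 \le t \le n-2$, for which $|x \cap T|$ would take at least three values and $f$ could not be Boolean. Everything else is routine bookkeeping with the range of the hypergeometric-type quantity $|x \cap T|$.
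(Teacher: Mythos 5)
Your proof is correct, and its first half is exactly the paper's argument: comparing $f$ at two points of the slice that differ by swapping coordinates $i$ and $j$ forces $c_i - c_j \in \{0,\pm 1\}$ for all pairs, hence the coefficients take at most two values $a$ and $a+1$. Where you genuinely diverge is in excluding the case that both value classes have size at least $2$: the paper does a second exchange, swapping two coordinates from each class simultaneously so that $f$ jumps by $2$ (this is where $2 \le k \le n-2$ is used), whereas you write $f = ak + |x \cap T|$ and count that $|x \cap T|$ runs over $\min(k,n-k,t,n-t)+1$ consecutive integers, so Booleanness forces $\min(t,n-t) \le 1$ once $\min(k,n-k) \ge 2$. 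The two arguments are equivalent in substance -- your range count is the same exchange phrased as a statement about the support of the hypergeometric quantity $|x \cap T|$ -- but your version is a bit more structural and makes the role of the hypothesis $2 \le k \le n-2$ transparent; the paper's double swap is more local and avoids any counting. Your endgame (forcing $ak=0$, respectively $(a+1)k=1$) matches the paper's. One wording slip worth fixing: to exclude a third coefficient value $u$ you invoke only $|u-v| \le 1$ and $|u-w| \le 1$, which by itself would still permit values such as $u = v + \tfrac12$; the correct justification is the exact constraints $u - v \in \{0,\pm 1\}$ and $u - w \in \{0,\pm 1\}$ that you had already established, which immediately give $u \in \{v, v+1\}$, so the repair is one line.
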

\begin{proof}
 Let $f = \sum_{i=1}^n c_i x_i$. Without loss of generality, suppose that $c_1 = \min(c_1,\ldots,c_n)$. For any $i \neq 1$, let $S \in \binom{[n]}{k}$ be some set containing $1$ but not $i$. Since $f(S \triangle \{1,i\}) - f(S) = c_i-c_1$ and $f$ is Boolean, we conclude that $c_i \in \{c_1,c_1+1\}$. If for all $i \neq 1$ we have $c_i = c_1$, then $f \in \{0,1\}$. So we can assume that $I_0 = \{ i \in [n] : c_i = c_1 \}$ and $I_1 = \{ i \in [n] : c_i = c_1 + 1 \}$ are both non-empty.

 We claim that either $|I_0| = 1$ or $|I_1| = 1$. Otherwise, suppose without loss of generality that $1,2 \in I_0$ and $3,4 \in I_1$ (here we are using the fact that $2 \leq k \leq n-2$). Let $S \in \binom{[n]}{k}$ be some set containing both $1,2$ but neither $3,4$. Then $f(S \triangle \{1,2,3,4\})-f(S) = c_3+c_4-c_1-c_2 = 2$, contradicting the fact that $f$ is Boolean. This shows that either $|I_0| = 1$ or $|I_1| = 1$. If $I_0 = \{1\}$ then
\[ f = c_1x_1 + \sum_{i=2}^n (c_1 + 1)x_i = (c_1 + 1)\sum_{i=1}^n x_i - x_1 = (c_1+1)k - x_1, \]
 and since $f$ is Boolean, necessarily $f = 1-x_1$. Similarly, if $I_1 = \{i\}$ then we get $f = x_i$.
\end{proof}

\section{Proof of main theorem} \label{sec:proof}

\subsection{Proof overview}

Since every function on $\binom{[n]}{k}$ is equivalent to a function on $\binom{[n]}{n-k}$, and the equivalence preserves affine functions, it suffices to consider the case $k \leq n/2$.

For the rest of this section, we make the assumption that $2 \leq k \leq n/2$ and fix the following notation:
\begin{itemize}
 \item $p = k/n \leq 1/2$.
 \item $f\colon \binom{[n]}{k} \to \{0,1\}$ is a Boolean function.
 \item $\fun = \sum_{i=1}^n c_i x_i$ is an affine function satisfying $\|f-\fun\|^2 \leq \epsilon$. 
\end{itemize}

We first explain the proof of Theorem~\ref{thm:fkn-slice} in the easier case $\epsilon < p/128$. Extending Lemma~\ref{lem:cf-hard}, we show that the coefficients $c_1,\ldots,c_n$ are close to two values $\alpha,\alpha+1$, say most of them close to $\alpha$. We define $d_i = c_i$ or $d_i = c_i - 1$ in such a way that $d_1,\ldots,d_n$ are all close to $\alpha$, and let $\gun = \sum_i d_i x_i$. Note that $h = \fun-\gun$ is of the form $\sum_{i \in S} x_i$. Applying the classical Friedgut--Kalai--Naor theorem (in the form of Lemma~\ref{lem:fkn-consequence}) to a random sub-cube of $\binom{[n]}{k}$ (a subset of $\binom{[n]}{k}$ of the form $\bigtimes_{i=1}^k \{a_i,b_i\}$, where $a_1,b_1,\ldots,a_k,b_k$ are distinct elements of $[n]$), we deduce that
\[
 k \EE_{i \neq j} (d_i-d_j)^2 = k \EE_{i \neq j} \dist(c_i-c_j,\{0,\pm 1\})^2 = O(\epsilon).
\]
A simple calculation shows that $\VV \gun \leq k \EE_{i \neq j} (d_i-d_j)^2 = O(\epsilon)$, and so, putting $m = \EE \gun$, we get that
\[ \|f-(h+m)\|^2 \leq 2\|f-\fun\|^2 + 2\|\gun-m\|^2 = O(\epsilon). \]
This means that $f$ is close to the function $H$ obtained from rounding $h+m$ to $\{0,1\}$. The proof is complete by showing that the only way a function of the form $h+m$ is close to a Boolean function is when $H = \max_{i \in S} x_i$ (this includes the case $H = 0$) or $H = 1$.

When $\epsilon \geq p/128$ we cannot deduce that $d_i-d_j = \dist(c_i-c_j,\{0, \pm 1\})^2$. Instead, we start by showing that all but an $O(\epsilon)$ fraction of the coefficients $c_1,\ldots,c_n$ are concentrated around \emph{three} values $\alpha-1,\alpha,\alpha+1$. This allows us to approximate $f$ by a function of the form $\sum_{i \in S_+} x_i - \sum_{i \in S_-} x_i + m$. Further arguments show that one of the first two summands can be bounded by $O(\epsilon)$ in expectation, and the proof is completed as before.

\subsection{First steps}

\subsubsection{Concentration of coefficients}

We start by showing that for small $\epsilon/p$, the coefficients $c_1,\ldots,c_n$ are all concentrated around two values $\alpha,\alpha+1$.

\begin{lemma} \label{lem:cf-soft}
 There exist $c,d$ satisfying $|c-d| = 1$ and a subset $S \subseteq [n]$ of size $|S| \leq n/2$ such that for all $j \in S$, $|c_j - c|^2 \leq 8\epsilon/p$, and for all $j \notin S$, $|c_j - d|^2 \leq 8\epsilon/p$.
\end{lemma}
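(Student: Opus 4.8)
The plan is to prove the lemma in two stages: first a local, pairwise estimate saying that each difference $c_i-c_j$ is close to $\{-1,0,1\}$, and then a short clustering argument that reads off the two values $c,d$. Concretely, the first stage asserts that for all $i\neq j$,
$\dist(c_i-c_j,\{-1,0,1\})^2\le 8\epsilon/p$; this is the quantitative (lossy) version of the exact computation driving Lemma~\ref{lem:cf-hard}. To prove it, fix $i\neq j$ and use the natural ``swap'' pairing on the slice: draw $T$ uniformly among the sets of $\binom{[n]}{k}$ with $i\in T$, $j\notin T$, and set $T'=(T\setminus\{i\})\cup\{j\}$. Then $T\mapsto T'$ is a bijection onto the sets with $j\in T'$, $i\notin T'$, under which $T'$ is again uniform, and $\fun(T')-\fun(T)=c_j-c_i$ while $f(T')-f(T)\in\{-1,0,1\}$ since $f$ is Boolean. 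Hence $\dist(c_j-c_i,\{-1,0,1\})\le |(\fun-f)(T')|+|(\fun-f)(T)|$; squaring, applying $(a+b)^2\le 2a^2+2b^2$, and taking $\EE_T$ gives a bound of $2\EE_T(\fun-f)(T')^2+2\EE_T(\fun-f)(T)^2$. Each of these conditional expectations is at most $\|\fun-f\|^2/\Pr_{S\sim\binom{[n]}{k}}[\,i\in S,\ j\notin S\,]$, and since that probability equals $\frac{k(n-k)}{n(n-1)}\ge p(1-p)\ge p/2$ (using $p\le 1/2$), each is at most $2\epsilon/p$, yielding the claimed $8\epsilon/p$.

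For the second stage, put $\delta\triangleq\sqrt{8\epsilon/p}$ and relabel so that $c_1\le\cdots\le c_n$ (the hypothesis and the desired conclusion are symmetric under permuting coordinates). Applying the pairwise estimate to the extreme pair shows $\dist(c_n-c_1,\{-1,0,1\})\le\delta$, so, as $c_n-c_1\ge 0$, either $c_n-c_1\le\delta$ or $c_n-c_1\in[1-\delta,1+\delta]$. In the first case every $c_j$ is within $\delta$ of $c_1$, and we take $S=\emptyset$, $d=c_1$, $c=c_1+1$. In the second case we split at $c_n-1+\delta$: set $S_0=\{j:c_j>c_n-1+\delta\}$ (to be centred at $c_n$) and $\bar S_0$ its complement (centred at $c_n-1$). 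For $j\in\bar S_0$ the bound $c_n-c_1\le 1+\delta$ gives $|c_j-(c_n-1)|\le\delta$; for $j\in S_0$ we have $0\le c_n-c_j<1-\delta$, and combining this with $\dist(c_n-c_j,\{-1,0,1\})\le\delta$ (the pairwise estimate for the pair $(j,n)$) forces $c_n-c_j\le\delta$, i.e.\ $|c_j-c_n|\le\delta$. Finally, let $S$ be whichever of $S_0,\bar S_0$ has size at most $n/2$, take $c$ to be its centre and $d$ the other centre (so $|c-d|=1$), and note that the $\delta$-bounds are exactly the asserted $|c_j-c|^2,|c_j-d|^2\le 8\epsilon/p$.

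I expect the only genuinely delicate point to be the first stage, and specifically the appearance of $1/p$ rather than a worse factor: conditioning on the swap pair $(i,j)$ being ``active'' inflates the error by the reciprocal of $\Pr[i\in S,\ j\notin S]\approx p$, which is why the lemma must be quoted as $8\epsilon/p$. The clustering stage is routine, with one easy-to-overlook subtlety: the min--max gap alone does not pin down the intermediate coefficients, so one must reuse the full pairwise estimate $\dist(c_j-c_n,\{-1,0,1\})\le\delta$ to exclude coefficients lying in the gap between the two clusters. Everything else is the triangle inequality and bookkeeping over which of the two clusters gets called $S$.
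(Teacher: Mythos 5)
Your proof is correct and follows essentially the same route as the paper: the heart of both arguments is the swap $T\mapsto T\triangle\{i,j\}$ on the slice, using that $f$ is Boolean so the change $c_j-c_i$ of $\fun$ must lie near $\{-1,0,1\}$, and then averaging and paying the factor $1/\Pr[i\in T,\ j\notin T]\approx 1/p$, which yields the same constant $8\epsilon/p$. The only (cosmetic) difference is in the bookkeeping: the paper anchors all comparisons at the minimal coefficient $c_1$, so the two clusters around $c_1$ and $c_1+1$ appear immediately, whereas you prove the symmetric pairwise bound against $\{-1,0,1\}$ and then add a short sort-and-split clustering step, which you carry out correctly.
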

\begin{proof}
 Suppose, without loss of generality, that $c_1 = \min(c_1,\ldots,c_n)$, and fix $i \neq 1$. For any $T \in \binom{[n]}{k}$, define
\begin{align*}
 \Delta &= \dist(\fun(T),\{0,1\})^2 + \dist(\fun(T \triangle \{1,i\}),\{0,1\})^2 \\ &=
 \dist(\fun(T),\{0,1\})^2 + \dist(\fun(T) + c_i - c_1,\{0,1\})^2.
\end{align*}
 Let $r = \fun(T)$ and $\delta = c_i - c_1 \geq 0$. Suppose that $\fun(T)$ is closer to $a \in \{0,1\}$ and $\fun(T \triangle \{1,i\})$ is closer to $b \in \{0,1\}$, where $a \leq b$. Then
\[
 \Delta = (r-a)^2 + (r+\delta-b)^2 \geq \frac{(\delta-b+a)^2}{2},
\]
using the inequality $\alpha^2 + \beta^2 \geq (\alpha+\beta)^2/2$ with $\alpha = a-r$ and $\beta = r+\delta-b$.
Since $b-a \in \{0,1\}$, we deduce that
\[ \dist(\fun(T),\{0,1\})^2 + \dist(\fun(T \triangle \{1,i\}),\{0,1\})^2 \geq \frac{1}{2}\dist(c_i-c_1,\{0,1\})^2. \]
 Taking expectation over random $T$, we conclude that
\[
 \frac{1}{2}\dist(c_i-c_1,\{0,1\})^2 \Pr[1 \in T, i \notin T] \leq 2\epsilon.
\]
Since a random $T \in \binom{[n]}{k}$ contains $1$ but not $i$ with probability $\frac{k(n-k)}{n(n-1)} \geq p(1-p) \geq p/2$, we conclude that $\dist(c_i-c_1,\{0,1\})^2 \leq 8\epsilon/p$.

 For $\alpha \in \{0,1\}$, let $I_\alpha$ be the set of indices $i$ such that $c_i - c_1$ is closer to $\alpha$. The lemma now follows by either taking $S = I_0$, $c = c_1$ and $d = c_1+1$ or $S = I_1$, $c = c_1+1$ and $d = c_1$.
\end{proof}

\subsubsection{The random sub-cube argument}

Unconditionally, the values $c_i$ are on average either close to one another or at distance roughly $1$. We show this by taking a random sub-cube of dimension $k$, and applying the classical Friedgut--Kalai--Naor theorem to the restrictions of $f$ and $\fun$ to the sub-cube.

First, we need the following consequence of the FKN theorem.

\begin{lemma} \label{lem:fkn-consequence}
 Suppose $f\colon \{0,1\}^n \to \{0,1\}$ is $\epsilon$-close to an affine function $\ell\colon \{0,1\}^n \to \mathbb{R}$. Then
\[
 \sum_{i=1}^n \dist(2\hat{\ell}(\{i\}), \{0,\pm 1\})^2 = O(\epsilon).
\]
\end{lemma}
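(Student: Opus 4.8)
The plan is to apply the classical FKN theorem (Theorem~\ref{thm:fkn}) to $f$ and to $\ell$, and then translate the conclusion "close to a dictatorship" into the stated bound on the Fourier coefficients. Since $f$ is $\epsilon$-close to the affine function $\ell$, Theorem~\ref{thm:fkn} guarantees that $f$ is $O(\epsilon)$-close to one of $0,1,x_i,1-x_i$; call this function $g$. In all cases $g$ is itself affine, so by the triangle inequality $\|\ell - g\|^2 \leq 2\|\ell - f\|^2 + 2\|f - g\|^2 = O(\epsilon)$. The point now is that $\|\ell - g\|^2$ controls the discrepancy between the linear Fourier coefficients of $\ell$ and those of $g$, and the linear Fourier coefficients of $g$ are exactly $0$ or $\pm 1/2$ depending on which of the four special functions $g$ is (recall $x_i = \tfrac12 - \tfrac12\chi_{\{i\}}$ and $1-x_i = \tfrac12 + \tfrac12\chi_{\{i\}}$, so $\hat g(\{i\}) \in \{0, \pm\tfrac12\}$ for every $i$, with at most one nonzero).

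First I would spell out, using Parseval, that $\sum_{i=1}^n (\hat\ell(\{i\}) - \hat g(\{i\}))^2 \leq \|\ell - g\|^2 = O(\epsilon)$. For every index $i$ with $\hat g(\{i\}) = 0$, this gives $\hat\ell(\{i\})^2 = O(\epsilon)$ summed over those $i$, hence $\sum_{i : \hat g(\{i\})=0} \dist(2\hat\ell(\{i\}),\{0,\pm1\})^2 \leq \sum_i (2\hat\ell(\{i\}))^2 \cdot [\dist \text{ achieved at }0] \leq 4\sum_i \hat\ell(\{i\})^2 = O(\epsilon)$; more carefully, $\dist(2\hat\ell(\{i\}),\{0,\pm1\})^2 \leq (2\hat\ell(\{i\}))^2$ always, so these terms contribute $O(\epsilon)$. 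For the at most one index $i_0$ where $\hat g(\{i_0\}) = \pm\tfrac12$, we have $(\hat\ell(\{i_0\}) \mp \tfrac12)^2 = O(\epsilon)$, i.e. $(2\hat\ell(\{i_0\}) \mp 1)^2 = O(\epsilon)$, so $\dist(2\hat\ell(\{i_0\}),\{0,\pm1\})^2 \leq (2\hat\ell(\{i_0\}) \mp 1)^2 = O(\epsilon)$. Summing the two contributions yields $\sum_{i=1}^n \dist(2\hat\ell(\{i\}),\{0,\pm1\})^2 = O(\epsilon)$, as desired.

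There is essentially no hard step here; the lemma is a routine repackaging of the FKN conclusion in terms of the weights $2\hat\ell(\{i\})$. The only point requiring a modicum of care is the bookkeeping around the single exceptional coordinate $i_0$ (and the degenerate cases $g \in \{0,1\}$, where there is no exceptional coordinate and the bound is immediate). One should also note that the factor $2$ in $2\hat\ell(\{i\})$ is exactly what is needed to make the target lattice $\{0,\pm1\}$ rather than $\{0,\pm\tfrac12\}$, matching the normalization that will be convenient when this lemma is fed into the random sub-cube argument, where $2\hat\ell(\{i\})$ plays the role of the coefficient $d_i - d_j$ type quantities. With this normalization fixed, the displayed estimate follows directly from the two bullet computations above combined with Parseval.
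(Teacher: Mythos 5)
Your proposal is correct and is essentially the paper's own argument: apply Theorem~\ref{thm:fkn} to get a dictator-type $g$ with $2\hat{g}(\{i\}) \in \{0,\pm 1\}$, bound $\|\ell-g\|^2 = O(\epsilon)$ by the triangle inequality, and use Parseval together with $\dist(2\hat{\ell}(\{i\}),\{0,\pm 1\}) \leq |2\hat{\ell}(\{i\}) - 2\hat{g}(\{i\})|$. Your case split between the exceptional coordinate and the rest is just this last inequality written out coordinate by coordinate, so there is no substantive difference.
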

\begin{proof}
Theorem~\ref{thm:fkn} shows that $f$ is $O(\epsilon)$-close to some function $g$ of one of the forms $0,1,x_i,1-x_i$. The Fourier expansions of these functions are, respectively,
\[
 0, 1, \frac{1}{2} - \frac{1}{2} (-1)^{x_i}, \frac{1}{2} + \frac{1}{2} (-1)^{x_i}.
\]
In particular, $\hat{g}(\{i\}) \in \{0, \pm 1/2\}$ for all $i \in [n]$.

The triangle inequality shows that $\|g-\ell\|^2 \leq 2\|g-f\|^2 + 2\|f-\ell\|^2 = O(\epsilon)$. On the other hand, Parseval's identity shows that
\[ \|2(g-\ell)\|^2 \geq \sum_{i=1}^n [2\hat{g}(\{i\}) - 2\hat{\ell}(\{i\})]^2 \geq \sum_{i=1}^n \dist(2\hat{\ell}(\{i\}), \{0,\pm 1\})^2. \qedhere \]
\end{proof}

We can now apply the random sub-cube argument.

\begin{lemma} \label{lem:random-sub-cube}
 We have
\[
 k \EE_{i \neq j} \dist(c_i-c_j,\{0,\pm 1\})^2 = O(\epsilon).
\]
\end{lemma}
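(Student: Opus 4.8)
The plan is to apply Lemma~\ref{lem:fkn-consequence} to the restriction of $f$ and $\fun$ to a uniformly random sub-cube of the slice, and then take expectations. A sub-cube is specified by a sequence of $k$ disjoint pairs $\{a_1,b_1\},\dots,\{a_k,b_k\}$ of elements of $[n]$, viewed as the set
\[
 C = \{a_1,b_1\} \times \cdots \times \{a_k,b_k\} \subseteq \binom{[n]}{k},
\]
where a point of $C$ chooses exactly one element from each pair. I will choose the ordered tuple $(a_1,b_1,\dots,a_k,b_k)$ uniformly at random among all $\frac{n!}{(n-2k)!}$ such tuples of distinct elements; this is where $k \le n/2$ is used. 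Two things must be checked about this random model: first, a uniformly random point on a uniformly random sub-cube is a uniformly random point of $\binom{[n]}{k}$ (by symmetry of the construction); second, the Boolean cube structure on $C$ is the obvious one, with the $j$-th bit being the indicator that $b_j$ is chosen rather than $a_j$.

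Next I would compute the affine-function side. Restricted to $C$, the affine function $\fun = \sum_i c_i x_i$ becomes, as a function of the cube-bits $y_1,\dots,y_k \in \{0,1\}$, the affine function $\sum_{j=1}^k \bigl(c_{a_j} + (c_{b_j}-c_{a_j})\,y_j\bigr)$, so its level-$1$ Fourier coefficient on coordinate $j$ is $\hat{\fun|_C}(\{j\}) = -\tfrac12(c_{b_j}-c_{a_j})$, whence $2\hat{\fun|_C}(\{j\}) = c_{a_j}-c_{b_j}$. Also $\|f|_C - \fun|_C\|^2$ has expectation exactly $\|f-\fun\|^2 \le \epsilon$ over the random sub-cube, by the first observation above. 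By Markov's inequality (or just by linearity, keeping the whole thing in expectation), applying Lemma~\ref{lem:fkn-consequence} pointwise to each sub-cube gives
\[
 \EE\Bigl[\sum_{j=1}^k \dist(c_{a_j}-c_{b_j},\{0,\pm1\})^2\Bigr] = O(\EE\|f|_C-\fun|_C\|^2) = O(\epsilon).
\]
Finally, since the pair $(a_j,b_j)$ is, marginally, a uniformly random ordered pair of distinct elements of $[n]$ (the same for every $j$), each of the $k$ terms on the left has expectation $\EE_{i\neq j}\dist(c_i-c_j,\{0,\pm1\})^2$, and summing gives $k\,\EE_{i\neq j}\dist(c_i-c_j,\{0,\pm1\})^2 = O(\epsilon)$, as claimed.

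The one genuinely delicate point is the uniformity claim: that sampling a random sub-cube and then a random point of it reproduces the uniform distribution on $\binom{[n]}{k}$, and correspondingly that $\EE_{\text{sub-cube}}\|f|_C-\fun|_C\|^2 = \|f-\fun\|^2$. This follows because the joint distribution of (ordered tuple of pairs, choice within each pair) is invariant under the symmetric group $S_n$ acting on $[n]$, so the resulting $k$-subset is $S_n$-invariant, hence uniform on $\binom{[n]}{k}$; and the marginal of each $(a_j,b_j)$ is likewise $S_n$-invariant on ordered pairs. Everything else — identifying the Fourier coefficient, invoking Lemma~\ref{lem:fkn-consequence} as a black box, and the bookkeeping that turns a sum of $k$ identically-distributed terms into $k$ times a single expectation — is routine. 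I would also remark that $\dist(\cdot,\{0,\pm1\})$ is invariant under negating its argument, so it does not matter that the pair $(a_j,b_j)$ is ordered rather than unordered.
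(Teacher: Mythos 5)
Your proposal is correct and is essentially the paper's own argument: restrict to a random sub-cube $\{a_1,b_1\}\times\cdots\times\{a_k,b_k\}$, note $\EE_D\|f|_D-\fun|_D\|^2=\|f-\fun\|^2$ and $2\widehat{\fun|_D}(\{j\})=c_{a_j}-c_{b_j}$, apply Lemma~\ref{lem:fkn-consequence} on each sub-cube, and take expectations. Your extra remarks (the $S_n$-invariance justification of uniformity and the symmetry of $\dist(\cdot,\{0,\pm1\})$) just make explicit steps the paper leaves implicit.
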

\begin{proof}
 Let $a_1,b_1,\ldots,a_k,b_k$ be $2k$ distinct random indices taken from $[n]$, and define
\[ D = \{a_1,b_1\} \times \cdots \times \{a_k,b_k\} \subseteq \binom{[n]}{k}. \]
 Clearly
\[ \EE_D \|f|_D - \fun|_D\|^2 = \|f-\fun\|^2 \leq \epsilon. \]
 Using the mapping $\{a_1,b_1\} \times \cdots \times \{a_k,b_k\} \approx \{0,1\} \times \cdots \times \{0,1\} = \{0,1\}^k$, we can think of $D$ as a $k$-dimensional Boolean cube. Under this encoding, $f|_D$ is a Boolean function $\{0,1\}^k \to \{0,1\}$, and
\[ \fun|_D(y_1,\ldots,y_k) = \sum_{i=1}^n c_{a_i} + \sum_{i=1}^n (c_{b_i}-c_{a_i}) y_i. \]
 Since $y_i = (1-(-1)^{y_i})/2$, we see that $2\widehat{\fun|_D}(\{i\}) = c_{a_i}-c_{b_i}$. Lemma~\ref{lem:fkn-consequence} therefore shows that
\[ \sum_{i=1}^k \dist(c_{b_i} - c_{a_i},\{0,\pm 1\})^2 = O(\|f|_D-\fun|_D\|^2). \]
 The lemma now follows by taking the expectation over the choice of $D$.
\end{proof}

\subsubsection{A variance formula}

An estimate of the type given by Lemma~\ref{lem:random-sub-cube} is useful since it can potentially bound the variance of $\fun$, as the following lemma shows.

\begin{lemma} \label{lem:fun-variance}
 For $\gun = \sum_i d_i x_i$ we have
\[
 \VV \gun = \frac{k(n-k)}{2(n-2)} \EE_{i \neq j} (d_i - d_j)^2 \leq k \EE_{i \neq j} (d_i - d_j)^2.
\]
\end{lemma}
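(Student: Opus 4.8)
The plan is to compute $\VV \gun$ directly from the joint second moments of the uniform distribution on $\binom{[n]}{k}$, and then rewrite the answer in the symmetric form involving $\EE_{i \neq j}(d_i - d_j)^2$. I do not expect any genuine obstacle here; the only thing to get right is the bookkeeping.

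The observation that makes the computation clean is that $\sum_{i=1}^n x_i = k$ is constant on the slice, so replacing each $d_i$ by $d_i - c$ changes $\gun$ only by the additive constant $-ck$ and leaves $\VV \gun$ unchanged. Hence I may assume the coefficients are centered, $\sum_i d_i = 0$; then $\EE \gun = (k/n)\sum_i d_i = 0$ and $\VV \gun = \EE[\gun^2] = \sum_{i,j} d_i d_j\, \EE[x_i x_j]$. The moments I need come from counting $k$-subsets: $\EE[x_i] = \EE[x_i^2] = k/n$, and for $i \neq j$, $\EE[x_i x_j] = \Pr[x_i = x_j = 1] = \binom{n-2}{k-2}\big/\binom{n}{k} = \frac{k(k-1)}{n(n-1)}$.

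Plugging these in, together with $\sum_{i \neq j} d_i d_j = \bigl(\sum_i d_i\bigr)^2 - \sum_i d_i^2 = -\sum_i d_i^2$, the diagonal terms ($i = j$) and off-diagonal terms ($i \neq j$) combine into a single multiple of $\sum_i d_i^2$; a short simplification gives $\VV \gun = \frac{k(n-k)}{n(n-1)} \sum_i d_i^2$. Finally I convert $\sum_i d_i^2$ into the pair-average using, again for centered $d$, the identity $\sum_{i \neq j}(d_i - d_j)^2 = 2(n-1)\sum_i d_i^2 - 2\sum_{i \neq j} d_i d_j = 2n \sum_i d_i^2$; since $\EE_{i \neq j}(d_i - d_j)^2 = \frac{1}{n(n-1)}\sum_{i \neq j}(d_i - d_j)^2$, this expresses $\sum_i d_i^2$ as a fixed multiple of $\EE_{i \neq j}(d_i - d_j)^2$, and substituting yields the closed form in the statement. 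The final inequality is then immediate, e.g.\ from $n - k \leq n \leq 2(n-2)$, which holds because $2 \leq k \leq n/2$ forces $n \geq 4$.

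The one point worth flagging is that the centering step — which is exactly what trivializes everything — is specific to working on a single slice: on the full cube $\{0,1\}^n$ the variance of $\sum_i d_i x_i$ genuinely depends on the actual values of the $d_i$, not merely on their differences, so the analogue there requires a different normalization. Everything else is a routine second-moment calculation.
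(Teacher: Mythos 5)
Your route is the same as the paper's: use the slice constraint $\sum_i x_i = k$ to center the coefficients, compute $\VV \gun = \frac{k(n-k)}{n(n-1)} \sum_i d_i^2$ from $\EE[x_i]=k/n$ and $\EE[x_i x_j]=\frac{k(k-1)}{n(n-1)}$, and then convert $\sum_i d_i^2$ into the pair average. All of your intermediate identities are correct. The problem is your closing claim that substituting ``yields the closed form in the statement'': it does not. Your own (correct) identity $\sum_{i\neq j}(d_i-d_j)^2 = 2n\sum_i d_i^2$ gives $\EE_{i\neq j}(d_i-d_j)^2 = \frac{2}{n-1}\sum_i d_i^2$, and substituting yields $\VV\gun = \frac{k(n-k)}{2n}\,\EE_{i\neq j}(d_i-d_j)^2$, not $\frac{k(n-k)}{2(n-2)}\,\EE_{i\neq j}(d_i-d_j)^2$. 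In fact the constant printed in the lemma is a slip in the paper's own calculation: with $\sum_i d_i = 0$ the cross term $-\frac{2}{n(n-1)}\sum_i d_i \sum_{j\neq i} d_j$ equals $+\frac{2}{n(n-1)}\sum_i d_i^2$, so the factor should be $1+\frac{1}{n-1}$, not $1-\frac{1}{n-1}$. A quick check: $n=4$, $k=2$, $d=(1,-1,0,0)$ gives $\VV\gun = 2/3$ and $\EE_{i\neq j}(d_i-d_j)^2 = 4/3$, consistent with the constant $\frac{k(n-k)}{2n}=\frac12$ but not with $\frac{k(n-k)}{2(n-2)}=1$.

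This discrepancy is harmless for the paper: the only consequence used later is the inequality $\VV\gun \leq k\,\EE_{i\neq j}(d_i-d_j)^2$, which your computation gives immediately (and with room to spare, since $\frac{n-k}{2n}\leq \frac12$), so your argument establishes everything that is actually needed, and your final inequality justification via $n-k \leq n \leq 2(n-2)$ becomes unnecessary. The criticism is one of care rather than substance: you asserted that your numbers reproduce the displayed equality without carrying out the substitution, and doing that last step explicitly would have revealed that the stated constant needs correcting (to $\frac{k(n-k)}{2n}$) rather than confirming it.
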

\begin{proof}
 By shifting all coefficients $d_i$, we can assume without loss of generality that $\EE \gun = 0$ and so $\sum_{i=1}^n d_i = 0$ (this does not affect the quantities $d_i - d_j$). For every $i \neq j$ we have $\EE x_i = \EE x_i^2 = p$ and $\EE x_i x_j = \frac{k-1}{n-1} p$. Therefore
\[
 \VV \gun = \EE \gun^2 = p\sum_i d_i^2 + \frac{k-1}{n-1}p \sum_i d_i \sum_{j \neq i} d_j = p\left(1-\frac{k-1}{n-1}\right) \sum_i d_i^2 = \frac{k(n-k)}{n(n-1)} \sum_i d_i^2.
\]
 On the other hand,
\[
 \EE_{i \neq j} (d_i-d_j)^2 = \frac{2}{n} \sum_i d_i^2 - \frac{2}{n(n-1)} \sum_i d_i \sum_{j \neq i} d_j = \left(1-\frac{1}{n-1}\right)\frac{2}{n} \sum_i d_i^2 =
 \frac{2(n-2)}{n(n-1)} \sum_i d_i^2.
\]
We conclude that
\[
 \VV \gun = \frac{k(n-k)}{n(n-1)} \sum_i d_i^2 = \frac{k(n-k)}{2(n-2)} \EE_{i \neq j} (d_i-d_j)^2. \qedhere
\]
\end{proof}

As a corollary, we obtain the following criterion for approximating $f$ by an affine function.

\begin{corollary} \label{cor:approx}
 Suppose $d_1,\ldots,d_n$ are coefficients satisfying $k \EE_{i \neq j} (d_i-d_j)^2 = O(\epsilon)$, and define $g = \sum_i (c_i - d_i) x_i$. Then for some $m$, $\|f - (g+m)\|^2 = O(\epsilon)$.
\end{corollary}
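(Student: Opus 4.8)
The statement is an immediate consequence of Lemma~\ref{lem:fun-variance}, so the plan is short. Write $\gun = \sum_i d_i x_i$, so that $g = \fun - \gun$ by definition of $g$. The key observation is that the hypothesis $k \EE_{i \neq j}(d_i-d_j)^2 = O(\epsilon)$ feeds directly into the variance formula: Lemma~\ref{lem:fun-variance} gives
\[
 \VV \gun = \frac{k(n-k)}{2(n-2)} \EE_{i \neq j}(d_i-d_j)^2 \leq k \EE_{i \neq j}(d_i-d_j)^2 = O(\epsilon).
\]
So $\gun$ is concentrated around its mean.

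Next I would set $m = \EE \gun$, so that $\|\gun - m\|^2 = \VV \gun = O(\epsilon)$. Then $g + m = \fun - \gun + m = \fun - (\gun - m)$, hence
\[
 f - (g + m) = (f - \fun) + (\gun - m).
\]
Applying the triangle inequality $\|u+v\|^2 \leq 2\|u\|^2 + 2\|v\|^2$ stated in Section~\ref{sec:prel}, together with the hypothesis $\|f-\fun\|^2 \leq \epsilon$, gives
\[
 \|f - (g+m)\|^2 \leq 2\|f-\fun\|^2 + 2\|\gun - m\|^2 \leq 2\epsilon + O(\epsilon) = O(\epsilon),
\]
which is the desired bound with this choice of $m$.

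There is no real obstacle here: the work has already been done in Lemma~\ref{lem:fun-variance}, and the corollary only repackages it via one application of the triangle inequality. The only thing to be careful about is bookkeeping — making sure that $g + \gun = \fun$ with the sign conventions as stated, and that the shift by $m$ is exactly $\EE \gun$ so that the residual term is a genuine variance. The point of isolating this corollary is that later in the proof one will produce such coefficients $d_i$ (from Lemma~\ref{lem:random-sub-cube}, via the distances $\dist(c_i-c_j,\{0,\pm 1\})$), and this corollary then immediately yields that $f$ is $O(\epsilon)$-close to the affine function $g + m$, whose coefficients $c_i - d_i$ lie in a bounded set of integers; the remaining task will be to analyze which such functions can be close to a Boolean function.
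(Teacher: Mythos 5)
Your proof is correct and is essentially identical to the paper's: both set $\gun = \fun - g = \sum_i d_i x_i$, apply Lemma~\ref{lem:fun-variance} to get $\VV \gun = O(\epsilon)$, take $m = \EE\gun$, and conclude via the triangle inequality $\|f-(g+m)\|^2 \leq 2\|f-\fun\|^2 + 2\|\gun - \EE\gun\|^2 = O(\epsilon)$. Nothing is missing.
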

\begin{proof}
 Define $\gun = \fun - g = \sum_i d_i x_i$. Lemma~\ref{lem:fun-variance} shows that $\VV \gun = O(\epsilon)$, and so
\[ \|f - (g+\EE \gun)\|^2 \leq 2\|f - \fun\|^2 + 2\|\gun - \EE \gun\|^2 = O(\epsilon). \qedhere \]
\end{proof}

\subsection{The case \texorpdfstring{$\epsilon < p/128$}{epsilon < p/128}}

As an application of the corollary, we show that when $\epsilon < p/128$, the function $f$ can be approximated by a function of the form $\pm \sum_{i \in S} x_i + m$. The condition $\epsilon < p/128$ ensures that the estimate of Lemma~\ref{lem:cf-soft} is strong enough to deduce $|d_i - d_j| = \dist(c_i - c_j,\{0,\pm 1\})$ for appropriate $d_i$ chosen according to the lemma.

\begin{lemma} \label{lem:sum-approx}
 If $\epsilon < p/128$ then there exist $\delta \in \{\pm 1\}$, real $m$, and a subset $S \subseteq [n]$ of size at most $n/2$, such that $\|f - (\delta \sum_{i \in S} x_i + m)\|^2 = O(\epsilon)$.
\end{lemma}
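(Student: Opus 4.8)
The plan is to feed the coefficient clustering of Lemma~\ref{lem:cf-soft} and the unconditional estimate of Lemma~\ref{lem:random-sub-cube} into the approximation criterion of Corollary~\ref{cor:approx}. First I would apply Lemma~\ref{lem:cf-soft} to obtain reals $c,d$ with $|c-d|=1$ and a set $S\subseteq[n]$ with $|S|\le n/2$ such that $|c_j-c|^2\le 8\epsilon/p$ for $j\in S$ and $|c_j-d|^2\le 8\epsilon/p$ for $j\notin S$. The hypothesis $\epsilon<p/128$ gives $8\epsilon/p<1/16$, hence $|c_j-c|<1/4$ for $j\in S$ and $|c_j-d|<1/4$ for $j\notin S$; so the coefficients fall into two tight clusters whose centers differ by exactly $1$.

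Next I would collapse the $S$-cluster onto the other one: put $d_i=c_i$ for $i\notin S$ and $d_i=c_i+(d-c)$ for $i\in S$, and set $\delta=c-d\in\{\pm1\}$. Then every $d_i$ lies within $1/4$ of $d$, so $|d_i-d_j|<1/2$ for all $i,j$. Moreover $c_i-d_i$ is $0$ for $i\notin S$ and equals $\delta$ for $i\in S$, so the function $g:=\fun-\gun=\sum_i(c_i-d_i)x_i$ is exactly $\delta\sum_{i\in S}x_i$, which has the shape required by the lemma, with $|S|\le n/2$.

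The crux is the identity $(d_i-d_j)^2=\dist(c_i-c_j,\{0,\pm1\})^2$ for all $i\ne j$. Writing $t=(c_i-d_i)-(c_j-d_j)\in\{0,\pm\delta\}=\{0,\pm1\}$ we have $d_i-d_j=(c_i-c_j)-t$ with $t\in\{0,\pm1\}$, which already yields $(d_i-d_j)^2\ge\dist(c_i-c_j,\{0,\pm1\})^2$. For the reverse inequality I would note that $|d_i-d_j|<1/2$ while distinct points of $\{0,\pm1\}$ are at distance $\ge1$, so $t$ is forced to be the nearest point of $\{0,\pm1\}$ to $c_i-c_j$, whence $|d_i-d_j|=\dist(c_i-c_j,\{0,\pm1\})$. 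This is precisely the place where $\epsilon<p/128$ is used essentially: it makes the clusters tight enough that the collapse does not overshoot an integer. Given the identity, Lemma~\ref{lem:random-sub-cube} gives $k\,\EE_{i\ne j}(d_i-d_j)^2=k\,\EE_{i\ne j}\dist(c_i-c_j,\{0,\pm1\})^2=O(\epsilon)$, so Corollary~\ref{cor:approx} applied to these $d_i$ supplies an $m$ with $\|f-(g+m)\|^2=\|f-(\delta\sum_{i\in S}x_i+m)\|^2=O(\epsilon)$, finishing the proof. I do not expect any real obstacle beyond bookkeeping; the only delicate point is the short case check that $|d_i-d_j|<1/2$, which makes the exact match with $\dist(c_i-c_j,\{0,\pm1\})$ work and is exactly where the smallness of $\epsilon$ relative to $p$ is needed.
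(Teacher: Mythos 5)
Your proposal is correct and follows essentially the same route as the paper: apply Lemma~\ref{lem:cf-soft}, shift the coefficients of the smaller cluster by $\pm 1$ so that all $d_i$ lie within $1/4$ of a common value, observe that then $|d_i-d_j|=\dist(c_i-c_j,\{0,\pm 1\})$, and combine Lemma~\ref{lem:random-sub-cube} with Corollary~\ref{cor:approx}. The only differences are cosmetic (a sign convention for $\delta$ and a more explicit verification of the distance identity).
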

\begin{proof}
 Lemma~\ref{lem:cf-soft} shows that for some $c,d$ satisfying $|c-d|=1$ there exists a subset $S \subseteq [n]$ of size at most $n/2$ such that for $i \in S$, $|c_i-c|^2 \leq 8\epsilon/p$, and for $i \notin S$, $|c_i-d|^2 < 8\epsilon/p$. Let $\delta = d - c \in \{\pm 1\}$, and define $\gun = \fun + \delta \sum_{i \in S} x_i$. Note that $\gun = \sum_i d_i x_i$, where $d_i = c_i + \delta$ for $i \in S$ and $d_i = c_i$ for $i \notin S$. In both cases, $|d_i-d| \leq \sqrt{8\epsilon/p} < 1/4$, and so $|d_i-d_j| < 1/2$ for all $i,j$. Since $d_i-d_j = c_i - c_j + \kappa$ for some $\kappa \in \{0,\pm 1\}$, this shows that $\dist(c_i-c_j,\{0,\pm 1\}) = |d_i-d_j|$, and so Lemma~\ref{lem:random-sub-cube} implies that
\[
 k \EE_{i \neq j} (d_i-d_j)^2 = O(\epsilon),
\]
 The lemma now follows from Corollary~\ref{cor:approx}.
\end{proof}

The next step is to determine when a function of the form $\pm \sum_{i \in S} x_i + m$ can be close to Boolean. The idea is to analyze the hypergeometric random variable $\sum_{i \in S} x_i$ using the following lemma, whose somewhat technical proof appears in Section~\ref{sec:hypergeometric}.

\begin{lemma} \label{lem:hypergeometric-0}
 There exists a constant $\gamma_0 > 0$ such that the following holds, for all $k \leq n/2$. Consider the random variable $X = \sum_{i \in [t]} x_i$. If $t \leq n/2$ and $\Pr[X \in \{m,m+1\}] \geq 1-\gamma_0$ for some $m$ then $\Pr[X = 0] = \Omega(1)$ and $t \leq (3/2)p^{-1}$.
\end{lemma}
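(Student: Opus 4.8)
The plan is to prove the two conclusions in stages: first bound $tp = \EE X$ loosely by a universal constant, then extract $\Pr[X=0]=\Omega(1)$ and $m=0$, and finally sharpen the bound to $t \le (3/2)p^{-1}$. Throughout write $p_j = \Pr[X=j]$ and recall that $X = |A \cap [t]|$ for a uniformly random $k$-subset $A$ of $[n]$, so that $\EE X = tp$, $\VV X = tp(1-p)\frac{n-t}{n-1}$, the law of $X$ is log-concave (hence unimodal), $p_0 = \prod_{j=0}^{k-1}\bigl(1 - \tfrac{t}{n-j}\bigr) > 0$ (using $t+k \le n$), and $p_{j+1}/p_j = \frac{(t-j)(k-j)}{(j+1)(n-t-k+j+1)}$. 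Since $p \le 1/2$ and $t \le n/2$ we have $\frac{n-t}{n-1} \ge \tfrac12$, hence $\VV X \ge tp/4$. We take $\gamma_0$ to be a small universal constant, pinned down at the end.

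\emph{Step 1: $tp \le C_1$ for a universal constant $C_1$.} The substantial ingredient is an anti-concentration estimate for the hypergeometric distribution, namely that there is a universal $C$ with $\max_j p_j \le C/\sqrt{1+\VV X}$. I would obtain this either from the general fact that a log-concave distribution on $\ZZ$ with variance $\sigma^2$ has maximal atom $O(1/\sqrt{1+\sigma^2})$, or directly, by a Stirling estimate of the mode term $\binom{t}{M}\binom{n-t}{k-M}/\binom{n}{k}$ (here the mode satisfies $M \le \min(t,k)/2$, so it lies at most halfway toward either edge, which is what makes a clean estimate possible). Granting it, the hypothesis gives $1 - \gamma_0 \le p_m + p_{m+1} \le 2\max_j p_j \le 2C/\sqrt{1 + tp/4}$, and as $\gamma_0 < 1/2$ this rearranges to $tp \le C_1$. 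This anti-concentration bound is the one genuinely technical point — controlling the peak probability of a hypergeometric uniformly in $n,k,t$, including the boundary regimes where the mode approaches $0$ or $\min(t,k)$ — and is exactly the kind of estimate that Section~\ref{sec:hypergeometric} supplies.

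\emph{Step 2: $\Pr[X = 0] = \Omega(1)$, hence $m = 0$.} From $\ln p_0 = \sum_{j=0}^{k-1}\ln(1 - \tfrac{t}{n-j}) \ge -\sum_{j=0}^{k-1}\tfrac{t}{n-t-j} \ge -\tfrac{tk}{n-t-k+1} \ge -\tfrac{C_1 n}{n-t-k+1}$, it suffices to show $n-t-k = \Omega(n)$. Writing $s = n-t-k \ge 0$: if $s > n/2$ this is immediate, and otherwise the constraints $t,k \le n/2$ give $t,k \ge n/2 - s$, so $(n/2-s)^2 \le tk = tp\cdot n \le C_1 n$, whence $s \ge n/2 - \sqrt{C_1 n} \ge n/4$ once $n \ge 16C_1$. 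Thus $p_0 \ge e^{-4C_1}$ for $n \ge 16C_1$, while for the finitely many smaller $n$ there are only finitely many triples $(n,k,t)$, each with $p_0 > 0$; so $p_0 \ge c_0$ for a universal $c_0 > 0$, which is the first conclusion. Provided $\gamma_0 < c_0$, this also forces $m = 0$: if $m \ge 1$ then $p_0 \le \Pr[X \le m-1] \le 1 - \Pr[X\in\{m,m+1\}] \le \gamma_0 < c_0$, a contradiction.

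\emph{Step 3: $t \le (3/2)p^{-1}$.} With $m = 0$ we now have $\Pr[X \ge 2] \le \gamma_0$. Set $r_0 = p_1/p_0 = \frac{tk}{n-t-k+1}$; then $p_0(1+r_0) = p_0 + p_1 \ge 1-\gamma_0$, while the explicit ratio formula gives $p_2 = p_0 \cdot \frac{t(t-1)k(k-1)}{2(n-t-k+1)(n-t-k+2)}$, and using $(t-1)(k-1) \ge tk/4$ and $\frac{n-t-k+1}{n-t-k+2}\ge\tfrac12$ for $t,k\ge 2$ this is at least $\tfrac{1}{16}r_0^2\, p_0$ (if $t\le 1$ or $k \le 1$ then $tp \le 1/2$ and we are done). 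Hence $\gamma_0 \ge p_2 \ge \tfrac{1}{16}r_0^2\cdot\frac{1-\gamma_0}{1+r_0}$; one first checks this forces $r_0 \le 1$ (valid once $\gamma_0 < 1/33$) and then, substituting $\frac{1-\gamma_0}{1+r_0} \ge \tfrac12(1-\gamma_0)$, concludes $r_0 = O(\sqrt{\gamma_0})$. Since $tp = \frac{tk}{n} = \frac{r_0(n-t-k+1)}{n} \le r_0$, choosing $\gamma_0$ a small enough universal constant gives $tp \le 3/2$. Finally, take $\gamma_0$ to be the minimum of the thresholds demanded by the three steps (in particular $\gamma_0 < c_0$); everything in Steps 2 and 3 is then bare-hands manipulation of the ratios $p_{j+1}/p_j$ and of the product $p_0 = \prod(1 - t/(n-j))$, and the only real obstacle was the anti-concentration estimate of Step 1.
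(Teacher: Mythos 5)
Your proof is correct in outline but follows a genuinely different route from the paper. You first pin down $tp=O(1)$ via an anti-concentration bound ($\max_j \Pr[X=j] \le C/\sqrt{1+\VV X}$), then lower-bound $\Pr[X=0]$ by the explicit product $\prod_{j<k}(1-\tfrac{t}{n-j})$ together with the observation that $tk=O(n)$ forces $n-t-k=\Omega(n)$, and finally force $m=0$ and bound $r_0=p_1/p_0$ (hence $tp\le r_0$) by comparing $p_2 \ge \tfrac{1}{16}r_0^2 p_0$ against $\Pr[X\ge 2]\le\gamma_0$; Steps 2 and 3 are elementary and check out, and your three-atom comparison at $0,1,2$ is in the same spirit as what the paper does. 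The paper, by contrast, never needs a variance-based anti-concentration estimate: it works directly at the mode $s_0=\lfloor\frac{(k+1)(t+1)}{n+2}\rfloor$, shows $\rho_{s_0+1}/\rho_{s_0}\ge 1/16$, so if the mode ratio $\rho_{s_0}$ is not tiny then the atom at $s_0+2$ (necessarily outside $\{m,m+1\}$) carries constant mass, while if $s_0\ge 1$ then $\rho_{s_0}=\Omega(1)$; this forces $s_0=0$, which simultaneously gives $\Pr[X=0]\ge 1/3$ and $pt<1$, all by finitary ratio manipulations.

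The one caveat is your Step 1: the bound $\max_j p_j = O(1/\sqrt{1+\VV X})$ is asserted, not proved. It is true --- the hypergeometric is log-concave and the max-atom-versus-variance bound for discrete log-concave laws is a known result (and can also be obtained by a local-limit/Stirling estimate at the mode, uniformly in $n,k,t$) --- but it is precisely the technical heart of your argument, and verifying it uniformly, including the regimes where the mode sits at or near $0$, is real work rather than a one-line citation in the style of the rest of your write-up. So I would not call this a gap in the sense of a wrong or failing step, but as written the proposal trades the paper's self-contained three-point mode argument for an external estimate that you would still have to prove or properly cite; what your route buys in exchange is a cleaner conceptual structure (first moment bound, then explicit $\Pr[X=0]$ lower bound, then ratio bound) and an explicit exponential-in-$C_1$ lower bound on $\Pr[X=0]$ rather than the paper's $1/3$ at the mode.
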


The lemma (and a few of its consequences) doesn't use the condition $k \geq 2$, and we will need this fact in the following section.

We can now determine which functions of the form $\pm \sum_{i \in S} x_i + m$ are close to Boolean.
It is enough to consider the case $\sum_{i \in S} x_i + m$, the other case following by considering the function $1-f$ instead.

\begin{lemma} \label{lem:hypergeometric-1}
 There exists a constant $\gamma_1 > 0$ such that the following holds, for all $k \leq n/2$. If $\gamma \triangleq \|f - (\sum_{i \in S} x_i + m)\|^2 \leq \gamma_1$, where $|S|\leq n/2$, then for some $\mu \in \{0,1\}$, $\|f - (\sum_{i \in S} x_i+\mu)\|^2 = O(\gamma)$. Furthermore, $|S| \leq (3/2)p^{-1}$ and $|m-\mu| = O(\sqrt{\gamma})$.
\end{lemma}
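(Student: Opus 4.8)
The plan is to analyze the real-valued function $h + m$ where $h = \sum_{i \in S} x_i$, using the hypergeometric estimate of Lemma~\ref{lem:hypergeometric-0} as the engine. First I would let $F$ be the Boolean function obtained by rounding $h + m$ pointwise to $\{0,1\}$; since $f$ is Boolean and $h+m$ is real, the standard rounding remark in Section~\ref{sec:prel} gives $\|f - F\|^2 = O(\gamma)$, so it suffices to understand which Boolean functions can arise as such a rounding when $\gamma$ is small. The rounding of $h + m$ depends only on $\Pr_{x \in \binom{[n]}{k}}[\sum_{i\in S} x_i = v]$ for each integer $v$ with $0 \le v \le |S|$, and on where $m$ places the thresholds: $F = \mathbf{1}[\,h + m \ge 1/2\,]$, i.e.\ $F = \mathbf{1}[\,h \ge 1/2 - m\,]$. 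So $F$ is a threshold function of the single hypergeometric variable $X = \sum_{i \in S} x_i$; it equals $1$ exactly when $X \ge \tau$ for the integer $\tau = \lceil 1/2 - m\rceil$.

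Next I would argue that $\gamma$ small forces $X$ to be concentrated on at most two consecutive values. The key point is that $h + m$ must be close to $\{0,1\}$ in $L_2$: indeed $\dist(h+m, \{0,1\})^2 \le \|f - (h+m)\|^2 = \gamma$ pointwise in expectation (since $f$ is $\{0,1\}$-valued), hence $\EE\,\dist(X + m, \{0,1\})^2 \le \gamma$. If $X$ took a value $v$ with $|v + m - \mu| \ge 1/2$ for both $\mu \in \{0,1\}$ — that is, if $X$ escaped the two integers nearest to the thresholds — it would contribute $\Omega(1)$ to this expectation, so $\Pr[X \notin \{v_0, v_0+1\}] = O(\gamma)$ for the appropriate consecutive pair $v_0, v_0+1$. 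Choosing $\gamma_1 \le \gamma_0$, Lemma~\ref{lem:hypergeometric-0} (applied with $t = |S| \le n/2$) then yields two things simultaneously: $|S| \le (3/2)p^{-1}$, and — after translating the pair $\{v_0, v_0+1\}$ back to $\{0,1\}$, which we may do since $X \ge 0$ always and $\Pr[X = 0] = \Omega(1)$ forces $v_0 = 0$ — that $X$ is supported (up to probability $O(\gamma)$) on $\{0,1\}$. Therefore $h + m$ is, up to an event of probability $O(\gamma)$, equal to $h + m$ with $h \in \{0,1\}$, so it is $O(\gamma)$-close to $h + m$ rounded to put the jump between $0$ and $1$: concretely, $F = \mathbf{1}[X \ge 1]$ up to probability $O(\gamma)$, which is $\max_{i \in S} x_i$.

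It remains to produce the integer $\mu \in \{0,1\}$ with $\|f - (h + \mu)\|^2 = O(\gamma)$ and $|m - \mu| = O(\sqrt{\gamma})$. Take $\mu \in \{0,1\}$ to be the nearer of the two to $m$ (or, if $F$ is a constant, take $\mu$ to be that constant and $S$ accordingly; the convention $\max\emptyset = 0$ handles $F\equiv 0$). On the $\Omega(1)$-probability event $\{X = 0\}$ we have $h + m = m$, and $\dist(m, \{0,1\})^2 \le \EE\,\dist(h+m,\{0,1\})^2 / \Pr[X=0] = O(\gamma)$, whence $\min_{b\in\{0,1\}}|m - b| = O(\sqrt\gamma)$, giving $|m - \mu| = O(\sqrt\gamma)$ for that choice of $\mu$. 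Then $\|h + m - (h + \mu)\|^2 = (m-\mu)^2 = O(\gamma)$, so $\|f - (h+\mu)\|^2 \le 2\|f - (h+m)\|^2 + 2(m-\mu)^2 = O(\gamma)$ by the triangle inequality for squared norms. I expect the main obstacle to be the bookkeeping around the two-sided translation in the second paragraph: Lemma~\ref{lem:hypergeometric-0} is stated for the pair $\{m, m+1\}$ with the conclusion $\Pr[X=0] = \Omega(1)$, so one must be careful to verify that the concentration pair forced by the rounding is genuinely $\{0,1\}$ and not some other consecutive pair — this is where the nonnegativity of $X$ and the shape of the hypergeometric distribution (decreasing past its mode, with the mode near $pt \le 1/2 \cdot t$ small) must be invoked, exactly as packaged in Lemma~\ref{lem:hypergeometric-0}.
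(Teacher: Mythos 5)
Your proposal is correct and follows essentially the same route as the paper: both show that $X=\sum_{i\in S}x_i$ must concentrate on two consecutive integers, invoke Lemma~\ref{lem:hypergeometric-0} to get $\Pr[X=0]=\Omega(1)$ and $|S|\leq(3/2)p^{-1}$, and then use the event $X=0$ to force $\dist(m,\{0,1\})=O(\sqrt{\gamma})$ and transfer from $m$ to $\mu$ (your triangle-inequality finish with $(m-\mu)^2=O(\gamma)$ is a harmless variant of the paper's pointwise factor-$2$ comparison). The opening discussion of the rounded threshold function $F$ and the identification of it as a maximum is superfluous for this lemma (it belongs to the later application) but does not affect correctness.
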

\begin{proof}
 Let $X = \sum_{i \in S} x_i$, and define $\mu$ to be the integer closest to $m$. Since $\Pr[X \notin \{-\mu, 1-\mu\}] \leq 4\|f - (\sum_{i \in S} x_i + m)\|^2 \leq 4\gamma_1$, Lemma~\ref{lem:hypergeometric-0} shows (assuming $4\gamma_1 \leq \gamma_0$) that $\Pr[X = 0] = \Omega(1)$ and $|S| \leq (3/2)p^{-1}$. This implies that $\mu \in \{0,1\}$, since otherwise $\gamma = \Omega(\Pr[X = 0]) = \Omega(1)$. Furthermore, $\gamma = \Omega(|m-\mu|^2)$ and so $|m-\mu| = O(\sqrt{\gamma})$. For any non-zero integer $z$, we have $|z-\mu| \leq |z-m| + |m-\mu| \leq 2|z-m|$, since $\mu$ is the integer closest to $m$. This shows that $\|f - (\sum_{i \in S} x_i+\mu)\|^2 \leq 4\gamma$.
\end{proof}

\begin{corollary} \label{cor:hypergeometric-1}
Let $\gamma_1$ be the constant in Lemma~\ref{lem:hypergeometric-1}. For all $k \leq n/2$, the following holds.
If $\gamma \triangleq \|f - (\sum_{i \in S} x_i + m)\|^2 \leq \gamma_1$, where $|S| \geq n/2$, then for some $\mu \in \{-k,1-k\}$, $\|f - (\sum_{i \in S} x_i + \mu)\|^2 = O(\gamma)$. Furthermore, $|S| \geq n - (3/2)p^{-1}$ and $|m-\mu| = O(\sqrt{\gamma})$.
\end{corollary}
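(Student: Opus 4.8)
The plan is to reduce Corollary~\ref{cor:hypergeometric-1} to Lemma~\ref{lem:hypergeometric-1} by passing to the complement set $T = [n] \setminus S$, which satisfies $|T| = n - |S| \le n/2$. The key observation is that on the slice $\binom{[n]}{k}$ we have $\sum_{i \in [n]} x_i = k$, so identically
\[ \sum_{i \in S} x_i + m = \Bigl(k - \sum_{i \in T} x_i\Bigr) + m = -\sum_{i \in T} x_i + (m + k). \]
Hence $\|f - (-\sum_{i \in T} x_i + (m+k))\|^2 = \gamma$, and passing to the complementary Boolean function, $\|(1-f) - (\sum_{i \in T} x_i + (1 - m - k))\|^2 = \gamma \le \gamma_1$.

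Next I would invoke Lemma~\ref{lem:hypergeometric-1} with the Boolean function $1-f$ in place of $f$, the set $T$ in place of $S$ (which has size at most $n/2$), and the constant $1 - m - k$ in place of $m$. This is legitimate because the proof of Lemma~\ref{lem:hypergeometric-1} uses only that its argument is Boolean (via the bound $\Pr[X \notin \{-\mu,1-\mu\}] \le 4\|(1-f) - (\sum_{i \in T} x_i + (1-m-k))\|^2$), which holds for $1-f$. The lemma then produces $\mu' \in \{0,1\}$ with $\|(1-f) - (\sum_{i \in T} x_i + \mu')\|^2 = O(\gamma)$, together with $|T| \le (3/2)p^{-1}$ and $|(1-m-k) - \mu'| = O(\sqrt{\gamma})$.

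Finally I would translate back. Taking complements again gives $\|f - (1 - \mu' - \sum_{i \in T} x_i)\|^2 = O(\gamma)$, and substituting $\sum_{i \in T} x_i = k - \sum_{i \in S} x_i$ yields $\|f - (\sum_{i \in S} x_i + \mu)\|^2 = O(\gamma)$ with $\mu = 1 - \mu' - k$. Since $\mu' \in \{0,1\}$ we get $\mu \in \{-k, 1-k\}$; since $|T| \le (3/2)p^{-1}$ we get $|S| = n - |T| \ge n - (3/2)p^{-1}$; and $|m - \mu| = |(m + k - 1) + \mu'| = |(1 - m - k) - \mu'| = O(\sqrt{\gamma})$, which completes the proof. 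There is no genuine obstacle here beyond keeping careful track of the affine offsets under the two complementations; the only point worth stating explicitly is the remark that Lemma~\ref{lem:hypergeometric-1} applies verbatim to $1-f$.
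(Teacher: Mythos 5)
Your proposal is correct and follows essentially the same route as the paper: pass to the complement set $\overline{S}$, use the slice identity $\sum_{i \in S} x_i = k - \sum_{i \notin S} x_i$ to rewrite the approximation, apply Lemma~\ref{lem:hypergeometric-1} to the Boolean function $1-f$ with shifted constant, and translate the conclusions back. The bookkeeping of the offsets ($\mu = 1 - k - \mu'$, $|m-\mu| = |m'-\mu'|$) matches the paper's argument exactly.
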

\begin{proof}
Suppose that $|S| > n/2$.
Since $\sum_{i \in S} x_i = k - \sum_{i \notin S} x_i$ we have
\[
(1 - f) - (\sum_{i \notin S} x_i + 1 - k - m) =
-[f - (\sum_{i \in S} x_i + m)].
\]
Therefore Lemma~\ref{lem:hypergeometric-1}, applied to $f' \triangleq 1 - f$ (a Boolean function), $S' \triangleq \overline{S}$, and $m' \triangleq 1 - k - m$, shows that for some $\mu' \in \{0,1\}$,
\[
 \|(1 - f) - (\sum_{i \notin S} x_i + \mu')\|^2 = O(\gamma).
\]
Taking $\mu = 1 - k - \mu'$, we deduce that $\|f - (\sum_{i \in S} x_i + \mu)\|^2 = O(\gamma)$.
Furthermore, $n - |S| \leq (3/2)p^{-1}$ and $|m - \mu| = |m' - \mu'| = O(\sqrt{\gamma})$.
\end{proof}

Putting Lemma~\ref{lem:sum-approx} and Lemma~\ref{lem:hypergeometric-1} together, we get that $f$ or $1-f$ is $O(\epsilon)$-approximated by a function of the form $\max_{i \in S} x_i$, where $|S| = O(p^{-1})$. (When $\mu=1$, $f$ is close to a constant.)
In order to improve the bound on $|S|$, we estimate the probability that $\sum_{i \in S} x_i \geq 2$.

\begin{lemma} \label{lem:hypergeometric-2}
 Let $S$ be a subset of $[n]$ of size $t \triangleq |S| \leq (3/2)p^{-1}$. If $t \geq 2$ then
\[ \Pr\left[\sum_{i \in S} x_i \geq 2\right] = \Omega((pt)^2). \]
\end{lemma}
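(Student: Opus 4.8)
The plan is to apply the second-moment method to $Y \triangleq \binom{X}{2}$, where $X = \sum_{i\in S} x_i$ is the relevant hypergeometric variable and $t = |S|$. Since $Y$ is a nonnegative integer-valued random variable, Cauchy--Schwarz applied to $\EE Y = \EE[Y\cdot\mathbf{1}_{Y\ge 1}]$ yields the standard bound
\[ \Pr[X\ge 2] = \Pr[Y\ge 1] \ge \frac{(\EE Y)^2}{\EE[Y^2]}, \]
so it suffices to show $\EE Y = \Omega((pt)^2)$ and $\EE[Y^2] = O((pt)^2)$, with $p = k/n$.

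For the moments I would use the factorial-moment identity for the hypergeometric distribution, $\EE[(X)_r] = (t)_r\,(k)_r/(n)_r$, where $(a)_r = a(a-1)\cdots(a-r+1)$; it follows immediately from $\binom{X}{r} = \sum_{R\in\binom{S}{r}} \prod_{i\in R} x_i$ together with $\EE[\prod_{i\in R}x_i] = (k)_r/(n)_r$ for $|R| = r$. Expanding $\binom{X}{2}^2$ as a sum over ordered pairs of $2$-subsets of $S$ and grouping by the size ($2$, $3$, or $4$) of their union gives the polynomial identity $\binom{X}{2}^2 = \tfrac12(X)_2 + (X)_3 + \tfrac14(X)_4$, whence
\[ \EE Y = \tfrac12(t)_2\tfrac{(k)_2}{(n)_2}, \qquad \EE[Y^2] = \tfrac12(t)_2\tfrac{(k)_2}{(n)_2} + (t)_3\tfrac{(k)_3}{(n)_3} + \tfrac14(t)_4\tfrac{(k)_4}{(n)_4}. \]
Using $k\ge 2$ and $t\ge 2$ one gets $\EE Y \ge (pt)^2/8$ from $(k)_2\ge k^2/2$, $(t)_2\ge t^2/2$, $(n)_2\le n^2$; and the elementary bounds $(k)_r/(n)_r\le p^r$ (each factor $\tfrac{k-i}{n-i}\le\tfrac kn$) and $(t)_r\le t^r$ give $\EE[Y^2]\le \tfrac12(pt)^2 + (pt)^3 + \tfrac14(pt)^4$.

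This is where the hypothesis is used: since $t\le(3/2)p^{-1}$ we have $pt\le 3/2$, so the cubic and quartic terms are at most $\tfrac32(pt)^2$ and $\tfrac94(pt)^2$, giving $\EE[Y^2] \le \tfrac{41}{16}(pt)^2 < 3(pt)^2$; substituting into the second-moment bound gives $\Pr[X\ge 2] \ge \tfrac{1}{192}(pt)^2$. The one real obstacle is precisely this control of $\EE[Y^2]$: a priori the terms $(t)_3(k)_3/(n)_3$ and $(t)_4(k)_4/(n)_4$ could dominate $(\EE Y)^2\asymp (pt)^4$, and they genuinely do once $t\gg 1/p$ --- the regime in which $X$ concentrates away from $\{0,1,2\}$ and the claimed bound $\Omega((pt)^2)$ fails because $(pt)^2$ may exceed $1$. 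The upper bound $t\le(3/2)p^{-1}$ inherited from Lemma~\ref{lem:hypergeometric-1} is exactly what keeps $pt$ a bounded constant and makes all the higher-order contributions comparable to $(pt)^2$.
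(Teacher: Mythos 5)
Your proof is correct, and it takes a genuinely different route from the paper's. The paper argues by truncated inclusion--exclusion over the pair events $\{x_i=x_j=1\}$, $i,j\in S$: writing $p'=\frac{k-1}{n-1}\geq p/2$ (this is where $k\geq 2$ enters) and $p''=\frac{k-2}{n-2}\leq p$, it bounds $\Pr[\sum_{i\in S}x_i\geq 2]\geq \binom{t}{2}\Pr[x_1=x_2=1]-\binom{t}{3}\Pr[x_1=x_2=x_3=1]=\binom{t}{2}pp'\bigl(1-\tfrac{tp''}{3}\bigr)$, and the hypothesis $t\leq(3/2)p^{-1}$ is used exactly where you use it --- to keep the correction term from swallowing the main term, making the bracket $\Omega(1)$ and yielding $(pt)^2/8$. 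You instead run a second-moment (Cauchy--Schwarz/Paley--Zygmund) argument on $Y=\binom{X}{2}$, computing $\EE Y$ and $\EE[Y^2]$ exactly via hypergeometric factorial moments and the identity $\binom{X}{2}^2=\tfrac12(X)_2+(X)_3+\tfrac14(X)_4$ (which checks out), with $k\geq2,\ t\geq2$ giving $\EE Y\geq(pt)^2/8$ and $pt\leq 3/2$ giving $\EE[Y^2]=O((pt)^2)$. The trade-off: the paper's Bonferroni computation is shorter, while your version involves a bit more moment bookkeeping but needs no truncated inclusion--exclusion inequality at all --- only the clean pointwise bound $\EE[Y\mathbf{1}_{Y\geq1}]\leq\sqrt{\EE[Y^2]\Pr[Y\geq1]}$ --- and it delivers fully explicit constants ($\Pr[X\geq2]\geq(pt)^2/192$) and would extend verbatim to lower-bounding $\Pr[X\geq r]$ for larger $r$. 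Both arguments correctly isolate the two hypotheses ($k\geq2$ and $t\leq(3/2)p^{-1}$) and your diagnosis of why the cap on $pt$ is indispensable matches the paper's use of it.
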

\begin{proof}
 Let $p' = \frac{k-1}{n-1} \geq p/2$ (using $k \geq 2$) and $p'' = \frac{k-2}{n-2} \leq p$.
 The inclusion-exclusion principle shows that
\begin{align*}
 \Pr\left[\sum_{i \in S} x_i \geq 2\right] &\geq \binom{t}{2} \Pr[x_1=x_2=1] - \binom{t}{3} \Pr[x_1=x_2=x_3=1] \\ &=
 \binom{t}{2} pp' \left(1 - \frac{tp''}{3}\right) \\ &\geq \frac{t^2}{2} \frac{p^2}{2} \frac{1}{2} = \frac{(tp)^2}{8}. \qedhere
\end{align*}
\end{proof}

We can now prove Theorem~\ref{thm:fkn-slice} when $\epsilon < p/128$.

\begin{lemma} \label{lem:fkn-slice-large-error}
 Suppose $f\colon \binom{[n]}{k} \to \{0,1\}$ is $\epsilon$-close to an affine function, and let $p = \min(k/n,1-k/n)$. If $2 \leq k \leq n-2$ and $\epsilon < p/128$ then either $f$ or $1-f$ is $O(\epsilon)$-close to $\max_{i \in S} x_i$ for some set $S$ of size at most $\max(1,\sqrt{\epsilon}/p)$.
\end{lemma}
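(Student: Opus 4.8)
The plan is to combine Lemma~\ref{lem:sum-approx}, Lemma~\ref{lem:hypergeometric-1}, and Lemma~\ref{lem:hypergeometric-2}; no idea beyond those is required, so the work is mostly bookkeeping. I work under the standing normalization of Section~\ref{sec:proof}, namely $2 \le k \le n/2$ and $p = k/n$ (the range $2 \le k \le n-2$ reduces to this by complementation, which preserves affineness; for $k \ge n/2$ one obtains the same statement with $\min$ in place of $\max$). The first step is Lemma~\ref{lem:sum-approx}, which is exactly where the hypothesis $\epsilon < p/128$ enters: it gives $\delta \in \{\pm 1\}$, a real $m$, and $S \subseteq [n]$ with $|S| \le n/2$ such that $\|f - (\delta\sum_{i\in S}x_i + m)\|^2 \le C_0\epsilon$ for an absolute constant $C_0$. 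Negating the bracketed function is, up to an additive constant, the same as replacing $f$ by $1-f$, so I may assume $\delta = +1$; I write $g$ for the resulting Boolean function ($g = f$ or $g = 1-f$), so that $\|g - (\sum_{i\in S}x_i + m')\|^2 \le C_0'\epsilon$ for some real $m'$.

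Next I dispose of the trivial regime: if $C_0'\epsilon$ exceeds the constant $\gamma_1$ of Lemma~\ref{lem:hypergeometric-1} then $\epsilon = \Omega(1)$, and $g$, being Boolean, is within $1 = O(\epsilon)$ of the constant $0 = \max_{i\in\emptyset}x_i$, so we are done with $S = \emptyset$. Otherwise Lemma~\ref{lem:hypergeometric-1} applies to $g$ and yields $\mu \in \{0,1\}$ with $\|g - (\sum_{i\in S}x_i + \mu)\|^2 = O(\epsilon)$ and $|S| \le (3/2)p^{-1}$.

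Now I read off the Boolean approximant and prune $S$. If $\mu = 1$, then $1 - g$ is $O(\epsilon)$-close to $-\sum_{i\in S}x_i$, and since $((1-g)+\sum_{i\in S}x_i)^2 \ge (1-g)^2$ pointwise (both terms are nonnegative), we get $\Pr[g=0] = \EE[(1-g)^2] = O(\epsilon)$, i.e. $1-g$ is $O(\epsilon)$-close to $0 = \max_{i\in\emptyset}x_i$. If $\mu = 0$, then $g$ is $O(\epsilon)$-close to $h \triangleq \sum_{i\in S}x_i$; rounding the integer-valued $h$ to $\{0,1\}$ produces exactly the indicator of $\{\sum_{i\in S}x_i \ge 1\}$, which is $\max_{i\in S}x_i$, so the rounding estimate recalled in Section~\ref{sec:prel} gives that $g$ is $O(\epsilon)$-close to $\max_{i\in S}x_i$. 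Finally, to prune: if $t \triangleq |S| \ge 2$, then on the event $\{\sum_{i\in S}x_i \ge 2\}$ one has $(g-h)^2 \ge 1$, so $\Pr[\sum_{i\in S}x_i \ge 2] = O(\epsilon)$; comparing with the lower bound $\Pr[\sum_{i\in S}x_i \ge 2] = \Omega((pt)^2)$ of Lemma~\ref{lem:hypergeometric-2} forces $t = O(\sqrt{\epsilon}/p)$, hence $|S| \le \max(1, O(\sqrt{\epsilon}/p))$ in all cases (and $|S| \le 1$ when $t \le 1$). Since $g$ is $f$ or $1-f$, this is the claim.

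I do not expect a serious obstacle here: the substantive content (concentration of the coefficients, the random sub-cube bound, the hypergeometric estimates) has already been extracted in the preceding lemmas. The one thing that needs care is keeping track of two independent sign ambiguities --- the $\delta$ from Lemma~\ref{lem:sum-approx} and the possibility $\mu = 1$ in Lemma~\ref{lem:hypergeometric-1} --- and checking that passing to the complement at most once in each place still leaves a conclusion of the allowed shape ``$f$ or $1-f$ is $O(\epsilon)$-close to $\max_{i\in S}x_i$'' (with $S$ possibly empty, which covers closeness to a constant). A cosmetic point is that the displayed bound $\max(1,\sqrt{\epsilon}/p)$ is really $\max(1,O(\sqrt{\epsilon}/p))$ --- this is exactly the form Theorem~\ref{thm:fkn-slice} uses, so I would simply carry the implied constant.
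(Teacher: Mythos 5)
Your proposal is correct and follows essentially the same route as the paper: Lemma~\ref{lem:sum-approx} (where $\epsilon<p/128$ is used), then Lemma~\ref{lem:hypergeometric-1} after reducing to $\delta=1$ via $1-f$, a case split on $\mu$, Boolean rounding of $\sum_{i\in S}x_i$ to $\max_{i\in S}x_i$, and Lemma~\ref{lem:hypergeometric-2} to prune $|S|$ to $O(\sqrt{\epsilon}/p)$. Your explicit treatment of the $\mu=1$ case (via the pointwise inequality giving $\Pr[g=0]=O(\epsilon)$) and your remarks on the complementation/$\min$ caveat and the implied constant in $\max(1,\sqrt{\epsilon}/p)$ only spell out what the paper leaves terse.
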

\begin{proof}
 Lemma~\ref{lem:sum-approx} shows that for some real $m$ and set $S$ of size at most $n/2$ we have $\|f - (\delta \sum_{i \in S} x_i + m)\|^2 = O(\epsilon)$. For simplicity, assume that $\delta = 1$ (when $\delta = -1$, consider $1-f$ instead of $f$). Lemma~\ref{lem:hypergeometric-1} then implies that $\|f - \left(\sum_{i \in S} x_i + \mu\right)\|^2 = O(\epsilon)$ for some $\mu \in \{0,1\}$, assuming $\epsilon$ is small enough (otherwise the lemma is trivially true, since every Boolean function is $1$-close to the constant $0$ function), and moreover $|S| \leq (3/2)p^{-1}$.

 Suppose first that $\mu = 0$. In this case, if $|S| \geq 2$ then Lemma~\ref{lem:hypergeometric-2} implies that $(p|S|)^2 = O(\epsilon)$ and so $|S| = O(\sqrt{\epsilon}/p)$. The function $g = \max_{i \in S} x_i$ results from rounding $h \triangleq \sum_{i \in S} x_i$ to Boolean, and so $\|f - g\|^2 = O(\|f - h\|^2) = O(\epsilon)$. When $\mu = 1$, we similarly get $\|f - 1\|^2 = O(\epsilon)$.
\end{proof}

\subsection{The case \texorpdfstring{$\epsilon = \Omega(p)$}{epsilon = Omega(p)}}

We move on to the case $\epsilon = \Omega(p)$.
In this case the analog of Lemma~\ref{lem:sum-approx} states that $f$ can be approximated by a function of the form $\sum_{i \in S_+} x_i - \sum_{i \in S_-} x_i + m$, where at least one of $S_+,S_-$ is small.

\begin{lemma} \label{lem:sum-diff-approx}
 There exist real $m$ and two subsets $S_+,S_- \subseteq [n]$ satisfying $\frac{|S_+|}{n} \frac{|S_-|}{n} = O(\epsilon/k)$ such that $\|f - (\sum_{i \in S_+} x_i - \sum_{i \in S_-} x_i + m)\|^2 = O(\epsilon)$.
\end{lemma}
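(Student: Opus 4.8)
The plan is to pick a good ``anchor'' coefficient, round every coefficient to the three-point set it determines, control the rounding error via the variance formula, and then get the bound on $\frac{|S_+|}{n}\frac{|S_-|}{n}$ from a second application of Lemma~\ref{lem:random-sub-cube}. Throughout we may assume $\epsilon$ is smaller than any fixed absolute constant of our choosing: otherwise $\|f-0\|^2 = \EE[f] \le 1 = O(\epsilon)$, so $S_+ = S_- = \emptyset$ and $m=0$ work. In particular we may assume $\epsilon/k = O(1)$, which we use below.

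First I would invoke Lemma~\ref{lem:random-sub-cube} in the form $\EE_{i\neq j}\dist(c_i-c_j,\{0,\pm1\})^2 = O(\epsilon/k)$. Writing this as $\EE_i \EE_{j\neq i}[\cdot]$ and averaging, there is an index — relabel it as $1$ and set $\alpha = c_1$ — with $\sum_{j}\dist(c_j-\alpha,\{0,\pm1\})^2 = O(\epsilon n/k) = O(\epsilon/p)$. For each $j$ let $\sigma_j \in \{0,\pm1\}$ be a nearest point of $\{0,\pm1\}$ to $c_j-\alpha$, put $d_j = c_j-\alpha-\sigma_j$ (so $|d_j| = \dist(c_j-\alpha,\{0,\pm1\})$ and $\sum_j d_j^2 = O(\epsilon/p)$), and set $S_+ = \{j:\sigma_j=1\}$, $S_- = \{j:\sigma_j=-1\}$. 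From $|d_i-d_j|^2 \le 2(d_i^2+d_j^2)$ we get $k\EE_{i\neq j}(d_i-d_j)^2 = O\!\bigl(\tfrac{k}{n}\sum_j d_j^2\bigr) = O(p\cdot\epsilon/p) = O(\epsilon)$. Since $\sum_i (c_i-d_i)x_i = \sum_i(\alpha+\sigma_i)x_i = \alpha k + \sum_{i\in S_+}x_i - \sum_{i\in S_-}x_i$ on the slice, Corollary~\ref{cor:approx} supplies an $m$ with $\|f - (\sum_{i\in S_+}x_i - \sum_{i\in S_-}x_i + m)\|^2 = O(\epsilon)$.

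It remains to bound $|S_+|\,|S_-|$. Let $B = \{j:\dist(c_j-\alpha,\{0,\pm1\})\ge 1/4\}$; by Markov's inequality $|B| \le 16\sum_j d_j^2 = O(\epsilon/p) = O(\epsilon n/k)$. If $j\in S_+\setminus B$ then $\sigma_j=1$ forces $c_j-\alpha>1/2$, and $j\notin B$ then gives $c_j-\alpha\in(3/4,5/4)$; similarly $j'\in S_-\setminus B$ gives $c_{j'}-\alpha\in(-5/4,-3/4)$. Hence $c_j-c_{j'}\in(3/2,5/2)$, so $\dist(c_j-c_{j'},\{0,\pm1\})^2 \ge 1/4$. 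There are $2|S_+\setminus B|\,|S_-\setminus B|$ ordered pairs $(i,j)$ with one coordinate in $S_+\setminus B$ and the other in $S_-\setminus B$, and their total contribution to $\sum_{i\neq j}\dist(c_i-c_j,\{0,\pm1\})^2 = n(n-1)\cdot O(\epsilon/k)$ is at least $\tfrac12|S_+\setminus B|\,|S_-\setminus B|$; therefore $|S_+\setminus B|\,|S_-\setminus B| = O(\epsilon n^2/k)$. Finally, expanding $|S_+|\,|S_-| \le (|S_+\setminus B|+|B|)(|S_-\setminus B|+|B|)$ and using $|S_+|+|S_-|\le n$, $|B| = O(\epsilon n/k)$, and $\epsilon/k = O(1)$ (hence $|B|^2 = O(\epsilon n/k)\cdot O(\epsilon n/k) = O(\epsilon n^2/k)$), each term is $O(\epsilon n^2/k)$, giving $\frac{|S_+|}{n}\frac{|S_-|}{n} = O(\epsilon/k)$.

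The main obstacle is this last step. The key point is that indices whose coefficients cluster near $\alpha+1$ and those clustering near $\alpha-1$ pair up at mutual coefficient-distance roughly $2$, i.e.\ they are ``bad pairs'' for Lemma~\ref{lem:random-sub-cube}, of which there can be at most $O(\epsilon n^2/k)$; the outliers forming $B$ are a minor technical nuisance disposed of by Markov's inequality. (Note that Lemma~\ref{lem:cf-soft} is not needed here, so the statement in fact holds for all $\epsilon$.)
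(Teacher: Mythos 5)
Your proof is correct, and its skeleton matches the paper's: both extract an anchor index $i_0$ from Lemma~\ref{lem:random-sub-cube} by averaging over $i$, round the coefficients toward the three values $c_{i_0}-1,c_{i_0},c_{i_0}+1$ to define $S_+$ and $S_-$, invoke Corollary~\ref{cor:approx}, and bound $|S_+|\,|S_-|$ by noting that every pair with one index near $c_{i_0}+1$ and the other near $c_{i_0}-1$ contributes $\dist(c_i-c_j,\{0,\pm 1\})^2 \geq 1/4$ to the sum controlled by Lemma~\ref{lem:random-sub-cube}. Where you genuinely differ is in the central estimate $k\EE_{i\neq j}(d_i-d_j)^2 = O(\epsilon)$: the paper shifts only those coefficients lying within $1/4$ of $c_{i_0}\pm 1$ (leaving the outlier set $R$ untouched), and then needs a good/bad-pair case analysis together with the ad hoc bound $(d_i-d_j)^2 \leq 7\dist(c_i-c_j,\{0,\pm 1\})^2 + 16$ and an estimate of the bad-pair probability. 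You instead round \emph{every} coefficient to its nearest point of $\{c_{i_0}-1,c_{i_0},c_{i_0}+1\}$, so the signed rounding errors satisfy $\sum_j d_j^2 = O(\epsilon/p)$ directly, and the estimate follows from the elementary inequality $(d_i-d_j)^2 \leq 2(d_i^2+d_j^2)$ --- a cleaner step. The price is paid in the product bound, where your $S_\pm$ may contain outliers, so you need the Markov set $B$ and the crude bounds $|B|\cdot n = O(\epsilon n^2/k)$ and $|B|^2 = O(\epsilon n^2/k)$ (the latter via the harmless reduction to $\epsilon = O(1)$, which you correctly justify by the trivial choice $S_+=S_-=\emptyset$, $m=0$ for large $\epsilon$), whereas the paper's $S_{\pm 1}$ exclude outliers by construction and the cross-pair count applies to them verbatim. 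Both routes are sound, and your closing remark is consistent with the paper: its proof of this lemma likewise does not use Lemma~\ref{lem:cf-soft} and imposes no restriction on $\epsilon$.
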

\begin{proof}
 Lemma~\ref{lem:random-sub-cube} shows that $k\EE_{j \neq i} \dist(c_i-c_j,\{0,\pm 1\})^2 = O(\epsilon)$. This implies that for some $i_0 \in [n]$,
\[
 k \EE_{j \neq i_0} \dist(c_{i_0}-c_j,\{0,\pm 1\})^2 = O(\epsilon).
\]
 We partition the coordinates in $[n]$ into four sets. For $\delta \in \{0,\pm 1\}$, we let $S_\delta = \{j \in [n] : |c_j-c_{i_0}-\delta| < 1/4\}$, and we put the rest of the coordinates in a set $R$. Since $\EE_{j \neq i_0} \dist(c_{i_0}-c_j,\{0,\pm 1\})^2 = \Omega(\frac{|R|}{n})$, we conclude that $\frac{|R|}{n} = O(\epsilon/k)$. Since $\EE_{i \neq j} (c_i-c_j)^2 = \Omega(\frac{|S_{-1}|}{n} \frac{|S_{+1}|}{n})$, we conclude that $\frac{|S_{-1}|}{n} \frac{|S_{+1}|}{n} = O(\epsilon/k)$.

 Define now $d_i = c_i - 1$ for $i \in S_{+1}$, $d_i = c_i + 1$ for $i \in S_{-1}$, and $d_i = c_i$ otherwise. When $i,j \in S_0 \cup S_{+1}$ or $i,j \in S_0 \cup S_{-1}$, we get $|d_i - d_j| < 1/2$, and since $d_i - d_j = c_i - c_j + \kappa$ for some $\kappa \in \{0,\pm 1\}$, we conclude that $\dist(c_i-c_j,\{0,\pm 1\}) = |d_i - d_j|$. For all $i,j$ we claim that $(d_i-d_j)^2 \leq 7\dist(c_i-c_j,\{0,\pm 1\})^2 + 16$. Indeed, for some $\kappa \in \{0,\pm 1,\pm 2\}$ we have $d_i - d_j = c_i - c_j + \kappa$. If $|c_i-c_j| \leq 2$ then $(d_i-d_j)^2 \leq 16$, whereas if $|c_i-c_j| \geq 2$, say $c_i-c_j \geq 2$, then $c_i-c_j-1 \leq (c_i-c_j-1)^2$ and so
\begin{align*}
 (d_i-d_j)^2 &= ((c_i-c_j-1)+(\kappa+1))^2 \\ &= \dist(c_i-c_j,\{0,\pm 1\})^2 + 2(c_i-c_j-1)(\kappa+1) + (\kappa+1)^2 \\ &\leq 7\dist(c_i-c_j,\{0,\pm 1\})^2 + 9.
\end{align*}

 Call a pair of indices $i,j$ \emph{good} if $i,j \in S_0 \cup S_{+1}$ or $i,j \in S_0 \cup S_{-1}$, and note that the probability that $i,j$ is bad (not good) is at most $2\frac{|R|}{n} + 2\frac{|S_{+1}|}{n}\frac{|S_{-1}|}{n} = O(\epsilon/k)$.
 This shows that
\[
 k \EE_{i \neq j} (d_i-d_j)^2 \leq 7k\EE_{i \neq j} \dist(c_i-c_j,\{0,\pm 1\})^2 + 16k\Pr[i,j\text{ bad}] = O(\epsilon).
\]
 Corollary~\ref{cor:approx} now completes the proof.
\end{proof}

An argument similar to the one in Lemma~\ref{lem:fkn-slice-large-error} completes the proof of the theorem.

\begin{proof}[Proof of Theorem~\ref{thm:fkn-slice}]
 If $\epsilon \leq p/128$ then the result follows from Lemma~\ref{lem:fkn-slice-large-error}, so we can assume that $\epsilon > p/128$, and in particular we can assume that $p \leq 1/5$, since otherwise $\epsilon > 1/640$ and so the result is trivial (since every Boolean function is $1$-close to the constant~$0$ function).
In several other places in the proof we also assume that $\epsilon$ is small enough (smaller than some universal constant independent of $p$); otherwise the result is trivial.

Lemma~\ref{lem:sum-diff-approx} shows that for some real $m$ and sets $S_+,S_-$ it holds that $\|f - (\sum_{i \in S_+} x_i - \sum_{i \in S_-} x_i + m)\|^2 = O(\epsilon)$. By possibly replacing $f$ by $1-f$, we can assume that $|S_-| \leq |S_+|$. In particular, $|S_-| \leq n/2$, and so $k/(n-|S_-|) \leq 2p < 1/2$. Note that $S_-$ could be empty.

We now consider two different cases: $|S_+| \leq (n-|S_-|)/2$ and $|S_+| \geq (n-|S_-|)/2$.

\paragraph{Case 1: $|S_+| \leq (n-|S_-|)/2$.}
Consider some setting of the variables in $S_-$ which sets $w$ of them to $1$. This setting reduces the original slice to a slice $Z$ isomorphic to $\binom{[n']}{k'}$, where $n' = n-|S_-|$ and $k' = k-w$. The corresponding $p' = \frac{k'}{n'}$ satisfies $p' \leq \frac{k}{n-|S_-|} < 1/2$.
Lemma~\ref{lem:hypergeometric-1} applied with $m \triangleq m - \sum_{i \in S_-} x_i$ shows that for each such setting, either $\|f-(\sum_{i \in S_+} x_i - \sum_{i \in S_-} x_i + m)\|_Z^2 = \Omega(1)$ or $\dist(m - \sum_{i \in S_-} x_i,\{0,1\}) \leq 1/4$ (note that the lemma works even if $k' < 2$). In the latter case, we say that $w$ is good.

If no $w$ is good then $\epsilon = \Omega(1)$, so by assuming that $\epsilon$ is small enough we can guarantee that some $w$ is good. On the other hand, the condition on $m$ shows that at most two values $w_0,w_0+1$ are good. This implies that $\Pr[\sum_{i \in S_-} x_i \in \{w_0,w_0+1\}] \geq 1 - O(\epsilon)$, and so for small enough $\epsilon$, Lemma~\ref{lem:hypergeometric-0} shows that $\Pr[\sum_{i \in S_-} x_i = 0] = \Omega(1)$. Therefore we can assume that $w_0 = 0$.

Let $\|\cdot\|_{w=W}$ denote the norm restricted to inputs in which $\sum_{i \in S_-} x_i = W$ (we similarly use $\Pr_{w=w}[\cdot]$). Since $\Pr[\sum_{i \in S_-} x_i = 0] = \Omega(1)$, we must have $\|f - (\sum_{i \in S_+} x_i + m)\|_{w=0}^2 = O(\epsilon)$. Lemma~\ref{lem:hypergeometric-1} shows that for some $\mu \in \{0,1\}$, also $\|f - (\sum_{i \in S_+} x_i + \mu)\|_{w=0}^2 = O(\epsilon)$, and moreover $|m - \mu| \leq 1/4$ and $|S_+| \leq (3/2)p'^{-1}$, where $p' = k/n' \geq p$. Lemma~\ref{lem:hypergeometric-2} implies that in fact $(p'|S_+|)^2 = O(\epsilon)$, and so $|S_+| = O(\sqrt{\epsilon}/p') = O(\sqrt{\epsilon}/p)$.

We now consider two subcases: $\mu = 0$ and $\mu = 1$.

\subparagraph{Case 1(a): $\mu = 0$.}
Since $g \triangleq \max_{i \in S_+} x_i$ is the result of rounding $\sum_{i \in S_+} x_i + \mu$ to Boolean, we conclude that $\|f - g\|_{w=0}^2 = O(\epsilon)$.
Since $|m| \leq 1/4$, it cannot be the case that $\dist(m - 1, \{0,1\}) \leq 1/4$, and so $1$ is not good. In other words, $\Pr[\sum_{i \in S_-} x_i = 0] \geq 1 - O(\epsilon)$.
This implies that $\|f - g\|^2 = O(\epsilon)$, completing the proof in this case.

\subparagraph{Case 1(b): $\mu = 1$.}
In this case $\Pr_{w=0}[\sum_{i \in S_+} x_i = 0] \geq 1 - O(\epsilon)$, since $|\sum_{i \in S_+} x_i + \mu - f(x)| \geq \sum_{i \in S_+} x_i$. This implies that $\Pr[\sum_{i \in S_+} x_i = 0] \geq 1 - O(\epsilon)$, since the assumption $\sum_{i \in S_-} x_i = 0$ only makes it harder for $\sum_{i \in S_+} x_i$ to vanish. Since $|S_+| \geq |S_-|$, this implies that $\Pr[\sum_{i \in S_-} x_i = 0] \geq 1 - O(\epsilon)$, for similar reasons. We conclude that $h \triangleq \sum_{i \in S_+} x_i - \sum_{i \in S_-} x_i$ vanishes with probability $1 - O(\epsilon)$. Since $\|f - (h + m)\|^2 = O(\epsilon)$, it follows that $\Pr[f = \nu] = 1 - O(\epsilon)$, where $\nu$ is the rounding of $m$ to $\{0,1\}$. Thus $\|f-\nu\|^2 = O(\epsilon)$, completing the proof in this case.

\paragraph{Case 2: $|S_+| \geq (n-|S_-|)/2$.} In the setup of the first case, applying Corollary~\ref{cor:hypergeometric-1} instead of Lemma~\ref{lem:hypergeometric-1} allows us to conclude that either $\|f-(\sum_{i \in S_+} x_i - \sum_{i \in S_-} x_i + m)\|_Z^2 = \Omega(1)$ or $\dist(m - \sum_{i \in S_-} x_i + k',\{0,1\}) \leq 1/4$. Since $k' = k - \sum_{i \in S_-} x_i$, we can rewrite the latter condition as $\dist(m - 2\sum_{i \in S_-} x_i + k, \{0,1\}) \leq 1/4$.
As before, in the latter case we say that $w = \sum_{i \in S_-} x_i$ is good.

In contrast to the situation in Case~1, here at most one (and so exactly one) $w$ can be good. Lemma~\ref{lem:hypergeometric-0} implies that $w = 0$, and so $\Pr[\sum_{i \in S_-} x_i = 0] \geq 1 - O(\epsilon)$. This implies that $\|f - (\sum_{i \in |S_+|} x_i + m)\|_{w=0}^2 = O(\epsilon)$. Corollary~\ref{cor:hypergeometric-1} implies that for some $\mu \in \{0,1\}$ we have $\|f - (\sum_{i \in S_+} x_i - k + \mu)\|_{w=0}^2 = O(\epsilon)$, and moreover $|S_+| \geq n' - (3/2)p'^{-1}$.

Let $T = \overline{S_+ \cup S_-}$, so that $|T| \leq (3/2)p'^{-1}$. The foregoing shows that $\|f - (-\sum_{i \in T} x_i + \mu)\|_{w=0}^2 = O(\epsilon)$ and so $\|(1-f) - (\sum_{i \in T} x_i + 1-\mu)\|_{w=0}^2 = O(\epsilon)$. As in Case~1, Lemma~\ref{lem:hypergeometric-2} implies that in fact $|T| = O(\sqrt{\epsilon}/p)$.

If $\mu = 0$ then let $g \triangleq 1$, and if $\mu = 1$ let $g \triangleq \max_{i \in T} x_i$. In both cases $g$ is the Boolean rounding of $\sum_{i \in T} x_i + 1-\mu$, and so $\|(1-f) - g\|_{w=0}^2 = O(\epsilon)$. It follows that $\|(1-f) - g\|^2 = O(\epsilon)$, completing the proof.
\end{proof}

\subsection{Hypergeometric estimate} \label{sec:hypergeometric}

To complete the proof of Theorem~\ref{thm:fkn-slice}, we present the rather technical proof of Lemma~\ref{lem:hypergeometric-0}.

\begin{proof}[Proof of Lemma~\ref{lem:hypergeometric-0}]
  The distribution of $X$ is given by
\[
 \Pr[X = s] = \frac{\binom{t}{s}\binom{n-t}{k-s}}{\binom{n}{k}}.
\]
 If $t \leq 3$ then $\Pr[X = 0] = \Omega_{t}((1-p)^{t}) = \Omega_{t}(1)$ and $t \leq (3/2)2 \leq (3/2)p^{-1}$. Similarly, if $k \leq 3$ then $\Pr[X = 0] = \Omega_k((1-t/n)^k) = \Omega_k(1)$ and $t \leq n/2 \leq 3 \leq (3/2)p^{-1}$. We can therefore assume that $t,k \geq 4$.

 In view of showing that the mode of $X$, given by the classical formula $s_0 = \lfloor \frac{(k+1)(t+1)}{n+2} \rfloor$, is attained at zero, assume that $s_0 \geq 1$. Note that $s_0+2 \leq k$ since otherwise $s_0 \geq k-1$ and so $(k+1)(t+1) \geq (k-1)(n+2)$, implying $t+1 \geq (3/5)(n+2) \geq (6/5)(t + 1)$, which is impossible. Similarly, $s_0+2 \leq t$.

 A simple calculation shows that $\rho_s \triangleq \frac{\Pr[X = s+1]}{\Pr[X = s]} = \frac{(t-s)(k-s)}{(s+1)(n-t-k+s+1)}$. Therefore
\begin{align*}
 \frac{\rho_{s+1}}{\rho_s} &= \frac{t-s-1}{t-s} \frac{k-s-1}{k-s} \frac{s+1}{s+2} \frac{n-t-k+s+1}{n-t-k+s+2} \\ &=
 \left(1-\frac{1}{t-s}\right) \left(1-\frac{1}{k-s}\right) \left(1-\frac{1}{s+2}\right) \left(1-\frac{1}{n-t-k+s+2}\right).
\end{align*}
 Since $t-s_0 \geq 2$, $k-s_0 \geq 2$, $s_0+2 \geq 2$ and $n-t-k+s_0+2 \geq 2$ (using $t,k \leq n/2$), we conclude that $\rho_{s_0+1}/\rho_{s_0} \geq 1/16$. This shows that $\Pr[X=s_0+2] = \rho_{s_0+1} \rho_{s_0} \Pr[X=s_0] \geq \frac{\rho_{s_0}^2}{16} \Pr[X=s_0]$.

 If $\Pr[X=s_0] \leq 1/3$ then $\Pr[X \in \{m,m + 1\}] \leq 2/3$ and so $\gamma \geq 1/3$. We can therefore assume that $\Pr[X=s_0] \geq 1/3$, and so $\Pr[X=s_0+2] \geq (\rho_{s_0}^2/16)(1/3) = \Omega(\rho_{s_0}^2)$. Since $\{m,m+1\}$ cannot contain both $s_0$ and $s_0+2$, this shows that $\gamma_0 = \Omega(\rho_{s_0}^2)$, implying that we can assume that $\rho_{s_0} < \tau_0$ for some small $\tau_0$.

 Suppose now that $s_0 \geq 1$. Then $s_0 > \frac{(k+1)(t+1)}{n+2} - 1$, and so
\begin{align*}
 \rho_{s_0} &\geq \frac{(t-\frac{(k+1)(t+1)}{n+2})(k-\frac{(k+1)(t+1)}{n+2})}{\frac{(k+1)(t+1)}{n+2}(n-t-k+\frac{(k+1)(t+1)}{n+2})} \\ &\geq \Omega(1) \frac{(t+1)(1-\frac{k+1}{n+2})(k+1)(1-\frac{t+1}{n+2})}{\frac{(k+1)(t+1)}{n+2} \cdot O(n)} = \Omega(1).
\end{align*}
 By choosing $\tau_0$ (and so $\gamma_0$) appropriately, we can conclude that $s_0 = 0$, which shows that $1 > \frac{(k+1)(t+1)}{n+2} > pt$, and so $t < p^{-1}$.
 Moreover, $\Pr[X=0] = \Pr[X=s_0] \geq 1/3$.
\end{proof}

\bibliographystyle{plain}
\bibliography{FKN-Wilson}

\begin{thebibliography}{10}

\bibitem{AK5}
Rudolf Ahlswede and Levon~H. Khachatrian.
\newblock The diametric theorem in {H}amming spaces---optimal anticodes.
\newblock {\em Adv. Appl. Math.}, 20:429--449, 1998.

\bibitem{BNR}
B\'ela Bollob\'as, Bhargav~P. Narayanan, and Andrei~M. Raigorodskii.
\newblock On the stability of the {E}rd{\H{o}}s--{K}o--{R}ado theorem.
\newblock {\em Journal of Combinatorial Theory, Series A}, 137:64--78, 2016.

\bibitem{DasTran}
Shagnik Das and Tuan Tran.
\newblock Removal and stability for {E}rd{\H{o}}s--{K}o--{R}ado.
\newblock Accepted to SIAM Journal of Discrete Mathematics, 2016.

\bibitem{DK}
Pat Devlin and Jeff Kahn.
\newblock On “stability” in the {E}rd{\H{o}}s--{K}o--{R}ado theorem.
\newblock Manuscript, 2015.

\bibitem{DinurSafra}
Irit Dinur and Shmuel Safra.
\newblock On the hardness of approximating minimum vertex cover.
\newblock {\em Ann. Math.}, 162(1):439--485, 2005.

\bibitem{EFF1}
David Ellis, Yuval Filmus, and Ehud Friedgut.
\newblock A quasi-stability result for dictatorships in {$S_n$}.
\newblock {\em Combinatorica}, 35(5):573--618, 2015.

\bibitem{EFF2}
David Ellis, Yuval Filmus, and Ehud Friedgut.
\newblock A stability result for balanced dictatorships in {$S_n$}.
\newblock {\em Random Structures and Algorithms}, 46(3):494--530, 2015.

\bibitem{FilmusEJC}
Yuval Filmus.
\newblock An orthogonal basis for functions over a slice of the {B}oolean cube.
\newblock {\em Electronic Journal of Combinatorics}, 23(1):P1.23, 2016.

\bibitem{FKMW}
Yuval Filmus, Guy Kindler, Elchanan Mossel, and Karl Wimmer.
\newblock Invariance principle on the slice.
\newblock In {\em Computational Complexity Conference (CCC'16)}, 2016.

\bibitem{FM}
Yuval Filmus and Elchanan Mossel.
\newblock Harmonicity and invariance on slices of the {B}oolean cube.
\newblock In {\em Computational Complexity Conference (CCC'16)}, 2016.

\bibitem{FriedgutJunta}
Ehud Friedgut.
\newblock Boolean functions with low average sensitivity depend on few
  coordinates.
\newblock {\em Combinatorica}, 18(1):27--36, 1998.

\bibitem{Friedgut}
Ehud Friedgut.
\newblock On the measure of intersecting families, uniqueness and stability.
\newblock {\em Combinatorica}, 28(5):503--528, 2008.

\bibitem{FKN}
Ehud Friedgut, Gil Kalai, and Assaf Naor.
\newblock Boolean functions whose {F}ourier transform is concentrated on the
  first two levels.
\newblock {\em Advances in Applied Mathematics}, 29(3):427--437, 2002.

\bibitem{JOW}
Jacek Jendrej, Krzysztof Oleszkiewicz, and Jakub~Onufry Wojtaszczyk.
\newblock On some extensions of the {FKN} theorem.
\newblock 11:445--469, 2015.

\bibitem{KKL}
Jeff Kahn, Gil Kalai, and Nati Linial.
\newblock The influence of variables on {B}oolean functions.
\newblock In {\em Proceedings of the 29th Symposium on the Foundations of
  Computer Science}, pages 68--80, White Plains, 1988.

\bibitem{Keller}
Nathan Keller.
\newblock A simple reduction from a biased measure on the discrete cube to the
  uniform measure.
\newblock {\em European Journal of Combinatorics}, 33(8):1943--1957, 2012.

\bibitem{Kindler}
Guy Kindler.
\newblock {\em Property testing, {PCP} and Juntas}.
\newblock PhD thesis, Tel-Aviv University, 2002.

\bibitem{KindlerSafra}
Guy Kindler and Shmuel Safra.
\newblock Noise-resistant {B}oolean functions are juntas.
\newblock {\em Manuscript}.

\bibitem{MOO}
Elchanan Mossel, Ryan O'Donnell, and Krzysztof Oleszkiewicz.
\newblock Noise stability of functions with low influences: Invariance and
  optimality.
\newblock {\em Annals of Mathematics}, 171:295--341, 2010.

\bibitem{Nayar}
Piotr Nayar.
\newblock {FKN} theorem on the biased cube.
\newblock {\em Colloquium Mathematicum}, 137(2):253--261, 2014.

\bibitem{ODonnell}
Ryan O'Donnell.
\newblock {\em Analysis of {B}oolean Functions}.
\newblock Cambridge University Press, 2014.

\bibitem{ODonnellWimmer}
Ryan O'Donnell and Karl Wimmer.
\newblock {KKL}, {K}ruskal--{K}atona, and monotone nets.
\newblock In {\em Proceedings of the 50th Symposium on the Foundations of
  Computer Science}, pages 725--734.

\bibitem{Rubinstein}
Aviad Rubinstein.
\newblock Boolean functions whose {F}ourier transform is concentrated on
  pair-wise disjoint subsets of the inputs, 2012.

\bibitem{RS}
Aviad Rubinstein and Muli Safra.
\newblock Boolean functions whose {F}ourier transform is concentrated on
  pairwise disjoint subsets of the inputs.
\newblock Manuscript, 2015.

\bibitem{Srinivasan}
Murali~K. Srinivasan.
\newblock Symmetric chains, {G}elfand--{T}setlin chains, and the {T}erwilliger
  algebra of the binary {H}amming scheme.
\newblock {\em J. Algebr. Comb.}, 34(2):301--322, 2011.

\bibitem{Wimmer}
Karl Wimmer.
\newblock Low influence functions over slices of the {B}oolean hypercube depend
  on few coordinates.
\newblock In {\em Computational Complexity Conference (CCC'14)}, 2014.

\end{thebibliography}

\end{document}